\documentclass[11pt,reqno]{amsart}

\usepackage[T1]{fontenc}
\usepackage{mathpazo}
\usepackage{microtype}
\usepackage[a4paper,margin=31mm]{geometry}
\usepackage{amsmath,amssymb,mathtools}
\usepackage{enumitem}
\usepackage[dvipsnames]{xcolor}
\usepackage{hyperref}

\hypersetup{
  hidelinks,
  pdftitle={Small Kazhdan constants for small generating sets of linear groups},
  pdfauthor={Tsachik Gelander, Alexander Lubotzky, and Jvbin Yao},
  pdfsubject={Kazhdan constants and bounded-size generating sets},
  pdfkeywords={Kazhdan constant, property (T), linear group, Schottky group, ping-pong}
}

\numberwithin{equation}{section}

\theoremstyle{plain}
\newtheorem{theorem}{Theorem}[section]
\newtheorem{proposition}[theorem]{Proposition}
\newtheorem{lemma}[theorem]{Lemma}
\newtheorem{corollary}[theorem]{Corollary}

\theoremstyle{remark}
\newtheorem{remark}[theorem]{Remark}

\newcommand{\Zcl}[1]{\overline{#1}^{\,\mathrm{Zar}}}
\newcommand{\kk}{\kappa}
\DeclareMathOperator{\Opp}{Opp}
\DeclareMathOperator{\Int}{Int}

\title[Kazhdan constants for small generating sets]
{Small Kazhdan Constants for Small\\
Generating Sets of Linear Groups}

\author{Tsachik Gelander}
\address{{Department of Mathematics,
Northwestern University, 2033 Sheridan Road,
Evanston, IL 60208, USA}}
\email{{tsachik.gelander@northwestern.edu}}
\author{Alexander Lubotzky}
\address{{Department of Mathematics,
Weizmann Institute of Science, 234 Herzl Street,
Rehovot 7610001, Israel}}
\email{{alex.lubotzky@mail.huji.ac.il}}
\author{{Jvbin Yao}}
\address{Research Center for Operator Algebras,
School of Mathematical Sciences,
East China Normal University,
Shanghai 200241, PR China}
\email{52285500009@stu.ecnu.edu.cn}
\date{Preliminary draft -- 19 September 2026}

\subjclass[2020]{Primary 20F65; Secondary 20G15, 22D55}
\keywords{Kazhdan constant, property (T), linear group, Schottky subgroup,
ping-pong, Tits alternative}

\begin{document}

\begin{abstract}
Answering a question of {Lubotzky and Yao}, we prove that if
$\Gamma$ is an infinite finitely generated linear group over an
arbitrary field and $d(\Gamma)$ is the minimal cardinality of a
generating set, then
\[
   \kk_{\le d(\Gamma)+2}(\Gamma)=0.
\]
Here $\kk_{\le r}(\Gamma)$ denotes the infimum of the Kazhdan
constants over generating sets of cardinality at most $r$.
{
We also prove the same conclusion for every finitely generated group
admitting a non-elementary convergence action on a compact metrizable
space.  This includes non-elementary word-hyperbolic and relatively
hyperbolic groups, as well as finitely generated non-elementary subgroups
of word-hyperbolic groups.
}
\end{abstract}

\maketitle

\section{Introduction}\label{sec:introduction}

Let $\Gamma$ be a finitely generated group and let $S\subseteq\Gamma$ be a
finite, not necessarily symmetric, generating set.  Its Kazhdan constant is
\[
 \kk(\Gamma,S)=
 \inf_{\substack{\pi:\Gamma\to\mathcal U(\mathcal H)\\
                  \mathcal H^\Gamma=\{0\}}}
 \ \inf_{\|\xi\|=1}\ \max_{s\in S}\|\pi(s)\xi-\xi\|.
\]
Thus $\Gamma$ has property~$(T)$ if and only if
$\kk(\Gamma,S)>0$ for some, equivalently every, finite generating set $S$.
For $r\ge d(\Gamma)$, where $d(\Gamma)$ is the minimal number of generators
of $\Gamma$, put
\[
 \kk_{\le r}(\Gamma)=
 \inf_{\substack{\langle S\rangle=\Gamma\\|S|\le r}}
 \kk(\Gamma,S).
\]
Thus the usual uniform Kazhdan constant is
$\kk_{\mathrm u}(\Gamma)=\inf_r\kk_{\le r}(\Gamma)$.  The equality
$\kk_{\mathrm u}(\Gamma)=0$ does not, by itself, imply that
$\kk_{\le r}(\Gamma)=0$ for any fixed $r$.

We use the following terminology for cardinality bounds in such
vanishing results.  The generating sets are \emph{bounded} if their
cardinalities are bounded by a number depending on $\Gamma$, independently
of how small the Kazhdan constant is required to be.  They are \emph{small}
if this bound is $f(d(\Gamma))$, for a single function $f$ independent of
$\Gamma$, and \emph{minimal} if their cardinality is $d(\Gamma)$.
Thus a minimal result is stronger than a small result, which in turn is
stronger than a bounded result.

Gelander and \.{Z}uk \cite{GZ} answered a question of Lubotzky by exhibiting
groups with property~$(T)$ and zero uniform Kazhdan constant.  For groups that embed densely in a
connected Lie group, their argument already yields generating sets of
uniformly bounded cardinality.  More precisely, their
construction for a dense subgroup of a connected semisimple Lie group
already gives the bound $d(\Gamma)+2$.  Two auxiliary generators are
adjoined and used to move the original $d(\Gamma)$ generators into an
arbitrarily small neighborhood of the identity in the ambient Lie group,
while preserving generation of $\Gamma$.  Our construction has the same
general form: we adjoin $t,b$ and use words in them to modify the original
generators, again obtaining the bound $d(\Gamma)+2$.

Osin and Sonkin \cite{OS}
constructed infinite finitely generated groups with
$\kk_{\mathrm u}(\Gamma)>0$.  Recently, {the second and third authors}
\cite{LY} proved that $\kk_{\mathrm u}(\Gamma)=0$ for every infinite
finitely generated linear group, over an arbitrary field.  Their argument
allows the number of generators to grow as the Kazhdan constant
decreases.  They asked whether vanishing can be achieved with
\emph{bounded} generating sets, allowing the bound to depend on $\Gamma$.
{The third author} \cite{Yao} subsequently proved the minimal result
\[
 \kk_{\le2}\bigl(\mathrm{SL}_n(\mathbb Z)\bigr)=0,
 \qquad n\ge3.
\]
Indeed, $d(\mathrm{SL}_n(\mathbb Z))=2$ for $n\ge3$, so this
achieves vanishing over generating sets of minimal cardinality.

We prove a \emph{small} generating-set result for every infinite
finitely generated linear group over an arbitrary field, with the explicit
bound $d(\Gamma)+2$.  This strengthens the bounded generating-set
assertion asked for by {Lubotzky and Yao}.

\begin{theorem}\label{thm:main}
Let $F$ be a field and let
$\Gamma\le\mathrm{GL}_n(F)$ be infinite and finitely generated.  If
$d=d(\Gamma)$, then
\[
 \kk_{\le d+2}(\Gamma)=0.
\]
\end{theorem}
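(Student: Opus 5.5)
We reduce the vanishing of $\kk(\Gamma,S)$ to finding, for every $\varepsilon>0$, a unitary representation $\pi$ without invariant vectors, a unit vector $\xi$, and a generating set $S=\{s_1',\dots,s_d',t,b\}$ of $\Gamma$ such that $\|\pi(s)\xi-\xi\|<\varepsilon$ for all $s\in S$. The natural representation is the quasi-regular one, $\pi=\ell^2(\Gamma/\Lambda)$, with $\xi$ a normalized characteristic function of a large finite set $A\subseteq\Gamma/\Lambda$; we need $A$ almost invariant under each element of $S$. Since $\Gamma$ is infinite we may take $\Lambda$ of infinite index (or even $\Lambda=\{1\}$), so there are no invariant vectors. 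The plan: fix a minimal generating set $s_1,\dots,s_d$, produce elements $t,b\in\Gamma$ generating a free Schottky-type subgroup, and replace $s_i$ by $s_i'=w_i(t,b)\,s_i\,w_i'(t,b)$. Then $\langle s_1',\dots,s_d',t,b\rangle=\Gamma$ automatically. The words $w_i,w_i'$ are chosen so that each $s_i'$ moves a chosen set $A$ very little.

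We first split by the Tits alternative. If $\Gamma$ is virtually solvable, then $\Gamma$ is amenable (linear, finitely generated, virtually solvable). For an infinite amenable group, every finite generating set has Kazhdan constant $0$, because Følner sets give almost invariant vectors in $\ell^2(\Gamma)$ and there are no invariant vectors. So the theorem holds with $|S|=d$. In the remaining case, $\Gamma$ contains a nonabelian free subgroup. Using a local field $k$ and a representation $\rho:\Gamma\to\mathrm{GL}_m(k)$ with unbounded, strongly irreducible image (as in Tits' proof), we obtain proximal elements acting on $\mathbb P(k^m)$. We can then choose ping-pong pairs $t,b$ with attracting and repelling neighborhoods as small as we like, placed in general position relative to the finitely many $s_i^{\pm1}$.

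The core construction proceeds as follows. Take $\Lambda=\{1\}$ and work in $\ell^2(\Gamma)$. We want a finite set $A\subseteq\Gamma$ with $|sA\triangle A|<\varepsilon^2|A|$ for $s\in S$. Such a set cannot exist for all of $S$ when $\Gamma$ is nonamenable, so we instead use $\ell^2(\Gamma/H)$ with $H=\langle b\rangle$ or a suitable quasi-regular representation built from the Schottky dynamics. Concretely, we take $\pi$ to be the Koopman-type representation on $\ell^2$ of an orbit of a point $x$ in the projective space or boundary. We take $A$ to be a long segment $\{t^jx: 0\le j\le N\}$ of the orbit of an attracting fixed point region. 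Then $t$ moves $A$ by only two points, so $\|\pi(t)\xi-\xi\|\approx\sqrt{2/N}$. The element $b$ must also nearly fix $\xi$. For this we make $b$ commute with $t$ in the dynamics, or we take $b=t^{M}ct^{-M}$-type conjugates so that $b$ fixes most points of $A$. Each $s_i'=t^{k_i}\,s_i\,t^{-k_i}$ composed with corrections in $b$ is then arranged so that it maps most of $A$ into $A$. Here ping-pong is used to show that conjugating by high powers of $t$ localizes the action of $s_i$ near the fixed points of $t$.

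We expect the main obstacle to be the simultaneous requirement that the modified generators are almost invariant while the set still generates $\Gamma$ and no invariant vectors appear; the abstract's convergence-action generalization suggests the proof runs entirely through this dynamics. We would first try to work in $\ell^2(\Gamma/\langle t\rangle)$, taking $A$ to be a large ball in the tree-like coset structure, and to choose $b$ and the words $w_i$ via ping-pong so that each new generator preserves all but a small fraction of $A$.
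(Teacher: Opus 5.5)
Your preliminary steps are sound. The amenable case is handled correctly by F{\o}lner sets, and replacing $s_i$ by $w_i(t,b)\,s_i\,w_i'(t,b)$ preserves generation exactly as in the paper. The gap is the core step: you never fix a $\Gamma$-set on which the new generating set has a long almost invariant segment, and the candidates you list do not work.

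In $\ell^2$ of a projective orbit $\Gamma x$, the points $t^jx$ are discrete. So the fact that high powers of $t$ ``localize'' $s_i$ near the fixed points of $t$ is irrelevant. You would need $t^{-j}bt^{j}$ and suitable $t^{-j'}s_i't^{j}$ to lie \emph{exactly} in the stabilizer of $x$ for most $j$, and nothing forces this. Moreover, $b$ cannot commute with $t$ if $\langle t,b\rangle$ is Schottky. Large balls in a tree-like coset graph such as that of $\Gamma/\langle t\rangle$ are not almost invariant either, since their boundary is comparable to their volume.

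The missing idea is to reverse the logic: let the stabilizer be generated by exactly the elements that must fix the segment. Take $Y=\{b,x_1,\ldots,x_d\}$ and $H_M=\langle t^jyt^{-j}:y\in Y,\ 0\le j\le M\rangle$. Then each $y\in Y$ fixes every coset $H_Mt^j$ ($0\le j\le M$) exactly, and $t$ changes the normalized window sum only at its two ends. This gives $\kappa(\Gamma,\{t\}\cup Y)\le\sqrt{2/(M+1)}$ as soon as $[\Gamma:H_M]=\infty$, which also forces the cosets $H_Mt^{-1},\ldots,H_Mt^M$ to be distinct.

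The entire difficulty is then infinite index, and this needs two things your plan lacks. First, you need the reduction to property $(T)$, not merely to the non-amenable case, since a free subgroup of a virtually free $\Gamma$ can have finite index. Second, you need $H_M$ to be free, and this is where the specific shape of the corrections matters:
\begin{itemize}
\item[(1)] Put $b_j=t^jbt^{-j}$ and $x_i(N)=u_i^Ns_iv_i^N$ with $u_i=b_{r_i}$, $v_i=b_{-q_i}$.
\item[(2)] Choose the index intervals $[r_i,r_i+M]$ and $[-q_i,M-q_i]$ pairwise disjoint and avoiding $[0,M]$. Then the limiting endpoints $b^{+}_{j+r_i}$, $b^{-}_{j-q_i}$ of the conjugates $t^jx_i(N)t^{-j}$ form a $J_M$-separated family.
\item[(3)] Impose the bridge conditions $\zeta(t^+)\pitchfork s_i\zeta(t^-)$; this is the precise form of your ``general position.''
\item[(4)] Relative ping-pong then shows that, for large $N$, $H_M$ is the free product of $J_M=\langle b_0,\ldots,b_M\rangle$ with the cyclic groups $\langle t^jx_i(N)t^{-j}\rangle$.
\end{itemize}
None of these ingredients appears in your plan: the choice of $H_M$, the need for infinite index, the use of $(T)$, or the separation mechanism. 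As it stands, the proposal does not prove the theorem.
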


If $\Gamma$ does not have property~$(T)$, then $\kk(\Gamma,S)=0$ for every
finite generating set $S$, and the theorem is immediate.  The content is
therefore the property-$(T)$ case: the constants of the individual
generating sets are positive, but their infimum is zero.

The two additional generators in our construction are the
auxiliary elements $t$ and $b$; the other $d(\Gamma)$ generators are
modifications of a minimal generating set.

The general \emph{minimal} question remains open: does
\[
 \kk_{\le d(\Gamma)}(\Gamma)=0
\]
hold for every infinite finitely generated linear group $\Gamma$ over an
arbitrary field?

{The construction of Lubotzky and Yao} in \cite{LY} uses representations
with finite image and gives vanishing of the corresponding uniform
$\tau$-constant.  Our proof uses quasi-regular representations on infinite
coset spaces; it does not establish the analogous bounded-cardinality
statement when one restricts to representations with finite image.

{
The same bound also holds for groups admitting a non-elementary
convergence action, without any linearity assumption.  In
Section~\ref{sec:convergence-extension} we prove the following theorem.

\begin{theorem}[Convergence-group extension]\label{thm:intro-convergence}
Let $\Gamma$ be a finitely generated group admitting a non-elementary
convergence action on a compact metrizable space.  If $d=d(\Gamma)$, then
\[
   \kk_{\le d+2}(\Gamma)=0.
\]
\end{theorem}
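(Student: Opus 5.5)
We reduce to the property-$(T)$ case; otherwise every $\kk(\Gamma,S)$ vanishes and there is nothing to prove. Fix a minimal generating set $s_1,\dots,s_d$. The plan has three parts: obtain a Schottky-type pair from the convergence action, choose a quasi-regular representation whose unit vector is almost invariant under the new generators, and rewrite the original generators using words in the pair.

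The dynamical input is the standard theory of non-elementary convergence groups. $\Gamma$ contains loxodromic elements with pairwise disjoint fixed-point pairs, and ping-pong produces a free subgroup $\langle t,b\rangle$ acting with north--south dynamics. Its attracting and repelling points $t^{\pm}$ and $b^{\pm}$ can be chosen in general position relative to the finitely many $s_i$.

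Next we build a quasi-regular representation. Take a point $x$ of the limit set, for instance a conical limit point not fixed by any element of $\Gamma$ other than the identity, or else a suitable infinite-index subgroup $H=\mathrm{Stab}(x)$. Consider $\ell^2(\Gamma/H)$, or more conveniently $\ell^2$ of a $\Gamma$-orbit $\Gamma x$. Since the orbit is infinite, this representation has no nonzero invariant vectors.

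Then we choose the new generating set
\[
S_N=\{t^N,\; b,\; w_1 s_1 w_1',\dots, w_d s_d w_d'\},
\]
where the $w_i,w_i'$ are words in $t,b$ with $\langle S_N\rangle=\Gamma$. Generation must be arranged: for example, take the modified generators $s_i' = t^{N} s_i t^{-N}$ or $s_i u_i$ with $u_i\in\langle t,b\rangle$, and include elements from which $t$ and $b$ themselves are recoverable. One way is to adjoin $t$ and $b$ as the two extra generators and replace $s_i$ by $s_i\,t^{k_i}b^{m_i}t^{-k_i}$; since $t,b\in S$, the group generated is still $\Gamma$.

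The test vector $\xi_N$ is the normalized indicator of a large set $A_N\subset\Gamma x$. It must be almost invariant under $t$ and $b$, for instance a F\o lner-like set in the orbit of the free group, which fails because free groups are not amenable. The better choice is a set concentrated near an attracting fixed point, and here the idea is to use that $\ell^2(\Gamma x)$ restricted to a cyclic group is amenable. So we take $A_N$ to be a long segment $\{t^j y: |j|\le N\}$ of a $t$-orbit. This is almost $t$-invariant, with defect about $N^{-1/2}$. For the other generators we conjugate: we replace $b$ by $b_N=t^{M}bt^{-M}$, which moves points near $t^+$ only slightly in the dynamics but not in the $\ell^2$ sense. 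The honest requirement is that $b_N$ and each modified $s_i$ map most of $A_N$ into $A_N$, that is, fix or shift most points of the segment along the same $t$-orbit.

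The main obstacle is exactly this: arranging generators other than $t$ that nearly preserve a long $t$-orbit segment while still generating $\Gamma$. The first thing I would try is to choose $y$ and the subgroup $H$ so that the points $t^jy$ for $j$ in the middle range are fixed by the modified generators. This means taking $H$ to be a stabilizer large enough to contain conjugates $t^{j}s_i' t^{-j}$. Concretely, we set $s_i' = s_i\,g_i$ with $g_i\in\langle t,b\rangle$ chosen, by ping-pong, so that each $s_i'$ lies in a subgroup $K$ normalized in a controlled way by $t$. Making this precise, and replacing exact fixing by fixing up to an $O(1)$ boundary error, is where I expect the real work to lie.
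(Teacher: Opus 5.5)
Your outline has the right shell: reduce to the property-$(T)$ case, take a Schottky pair $t,b$ from the convergence dynamics, keep $t,b$ as the two extra generators so that correcting each $s_i$ by words in $t,b$ preserves generation, and test against a long $t$-segment in a quasi-regular representation. But it stops exactly at the step that constitutes the proof, and the concrete choices you do make would not work.

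\emph{First gap: the representation.} On $\ell^2(\Gamma x)$ for a limit point $x$, nothing makes $b$ or the corrected $s_i$ preserve a segment $\{t^jy\}$, as you observe yourself. If your subgroup $K$ is literally normalized by $t$ while containing $b$ and every $s_i'$, then $K$ is normal in $\Gamma=\langle t,b,s_1',\dots,s_d'\rangle$ with cyclic quotient. Because $\Gamma$ has $(T)$, $K$ then has finite index and the coset space is finite.

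The missing device is a subgroup that is only \emph{partially} $t$-invariant. With $Y=\{b,x_1,\dots,x_d\}$, put $H_M=\langle t^jyt^{-j}:y\in Y,\ 0\le j\le M\rangle$ and use $\xi=(M+1)^{-1/2}\sum_{j=0}^M\delta_{H_Mt^j}\in\ell^2(H_M\backslash\Gamma)$. Every $y\in Y$ fixes each summand exactly, since $t^jy^{-1}t^{-j}\in H_M$; so there is no error coming from $Y$, contrary to your ``$O(1)$ error'' picture. The element $t$ changes only the two end terms, provided $H_Mt^{-1},H_M,\dots,H_Mt^M$ are distinct. This distinctness follows from $[\Gamma:H_M]=\infty$. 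Indeed, if $t^r\in H_M$ with $1\le r\le M+1$, then $H_M$ contains every $t^kyt^{-k}$, $k\in\mathbb Z$, hence the normal subgroup $N$ they generate. Then $\langle N,t^r\rangle\subseteq H_M$ has index at most $r$, because $\Gamma/N$ is cyclic generated by the image of $t$.

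\emph{Second gap, the real missing idea.} You give no mechanism forcing $[\Gamma:H_M]=\infty$. Since $H_M$ and $t$ together generate $\Gamma$, a careless correction can easily give $H_M=\Gamma$. The paper makes $H_M$ a nonabelian free group, which cannot have finite index in a group with $(T)$. This requires:
\begin{enumerate}
\item $t$ chosen with $\zeta(t^+)\neq s_i\zeta(t^-)$ for all $i$; this is the precise general-position condition you leave unspecified.
\item The Schreier basis $b_j=t^jbt^{-j}$ of $\ker(B\to\mathbb Z)$.
\item \emph{Two-sided} corrections $x_i(N)=b_{r_i}^{N}s_ib_{-q_i}^{N}$ with $N\to\infty$. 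Here, for the fixed window length $M$, the positive integers $r_i,q_i$ make the $2d$ blocks $[r_i,r_i+M]$ and $[-q_i,M-q_i]$ pairwise disjoint and disjoint from $[0,M]$, while keeping $\zeta(b_{r_i}^-)\neq s_i\zeta(b_{-q_i}^+)$.
\end{enumerate}
Then $t^jx_i(N)t^{-j}=b_{r_i+j}^N(t^js_it^{-j})b_{j-q_i}^N$ contracts everything off $\zeta(b_{j-q_i}^-)$ towards $\zeta(b_{j+r_i}^+)$, and its inverse does the reverse. These $2d(M+1)$ points are distinct and lie off $\partial J_M$, where $J_M=\langle b_0,\dots,b_M\rangle$. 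They have trivial $J_M$-stabilizers and pairwise disjoint $J_M$-orbits. A relative ping-pong then gives $H_M\cong J_M*\mathop{*}_{i,j}\langle t^jx_i(N)t^{-j}\rangle$ for large $N$.

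Your one-sided corrections $s_it^{k_i}b^{m_i}t^{-k_i}$ with fixed, unspecified exponents provide neither the contraction nor the separation across all window conjugates $0\le j\le M$. So the argument as proposed does not close.
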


This includes non-elementary word-hyperbolic and relatively hyperbolic
groups, as well as finitely generated non-elementary subgroups of
word-hyperbolic groups; see Theorem~\ref{thm:convergence-extension} and
Corollary~\ref{cor:nonlinear-examples}.
\par}


{
\subsection{Sketch of proof}\label{subsec:proof-sketch}
We explain the algebraic strategy, assuming that $\Gamma$ has
property~$(T)$.  The key to obtaining a small Kazhdan constant is to
construct a long segment in a coset space on which all but one of the
generators act trivially, while the remaining generator shifts the
segment by one step.  More precisely, suppose that
$\Gamma=\langle t,Y\rangle$ and set
\[
 H_M=\langle t^jyt^{-j}:y\in Y,\ 0\le j\le M\rangle.
\]
Lemma~\ref{lem:window}, extending Yao's quasi-regular
estimate in \cite{Yao}, shows that if $H_M$ has infinite index, then the
cosets $H_Mt^j$, $0\le j\le M$, are distinct.  Their normalized sum in
$\ell^2(H_M\backslash\Gamma)$ is fixed by every $y\in Y$, whereas applying
$t$ changes only the two endpoint terms.  This gives
$\kk(\Gamma,\{t\}\cup Y)\le\sqrt{2/(M+1)}$.
We will arrange that $H_M$ is a nonabelian free group; it then has
infinite index, since a finite-index subgroup would inherit
property~$(T)$.

Start with a minimal generating tuple $s_1,\ldots,s_d$ and adjoin two
suitably chosen elements $t,b$ generating a Schottky free group
$B=\langle t,b\rangle$.  The element $t$ will perform the shift, while
$b$ supplies the conjugates
\[
 b_j=t^jbt^{-j},\qquad j\in\mathbb Z.
\]
These form a free basis of the kernel of the map $B\to\mathbb Z$
sending $t$ to $1$ and $b$ to $0$.  Thus
$J_M=\langle b_0,\ldots,b_M\rangle$ is already free.  For each original
generator choose $u_i=b_{r_i}$ and $v_i=b_{-q_i}$, and replace $s_i$ by
\[
 x_i(N)=u_i^Ns_iv_i^N.
\]
Because $u_i,v_i$ are words in the retained generators $t,b$, these
replacements are achieved by Nielsen moves on the enlarged tuple.
{In particular, we recover $s_i=u_i^{-N}x_i(N)v_i^{-N}$,
and so $S_{M,N}=\{t,b,x_1(N),\ldots,x_d(N)\}$ still generates
$\Gamma$ and has at most $d+2$ elements.}  Adding $t,b$ therefore gives us freedom to
modify all the original generators without losing generation or
increasing the cardinality further.

The integers $r_i,q_i$ separate the corrections and all their relevant
conjugates.  Indeed,
\[
 t^jx_i(N)t^{-j}
 =b_{r_i+j}^{\,N}(t^js_it^{-j})b_{j-q_i}^{\,N}.
\]
For fixed $M$, we choose the positive integers $r_i,q_i$ sufficiently
large that the $2d$ integer intervals
\[
 [r_i,r_i+M],\qquad[-q_i,M-q_i],\qquad 1\le i\le d,
\]
are pairwise disjoint and avoid $[0,M]$.  There are two intervals for
each original generator because we must control both the forward and
the inverse action: the left correction determines the limiting
attracting point, while the right correction determines the limiting
repelling point.  For $t^jx_i(N)t^{-j}$ these points are labelled by
$b_{r_i+j}^{+}$ and $b_{j-q_i}^{-}$ in the boundary of $B$.  As $j$ runs
from $0$ to $M$, their indices run through the two assigned intervals.
The disjoint intervals ensure that all these points are distinct.
Moreover, the free-basis decomposition shows that they lie outside
$\partial J_M$, have trivial stabilizers in $J_M$, and have pairwise
disjoint $J_M$-orbits.  This is the separation needed for ping-pong.

The required dynamical ingredient is a finite-family \emph{relative}
ping-pong statement: a family of elements with sufficiently strong
forward and inverse contraction towards such separated points can be
adjoined freely to $J_M$ (Proposition~\ref{prop:relative-pp}).  The
initial choice of $t,b$ and sufficiently large $r_i,q_i$ ensures that
the two-sided products above acquire this contraction as $N\to\infty$.
Hence, for large $N$, the group
\[
 H_{M,N}=\langle b_j,\ t^jx_i(N)t^{-j}:
                 0\le j\le M,\ 1\le i\le d\rangle
\]
is the free product of $J_M$ and the infinite cyclic groups generated
by the displayed conjugates of the $x_i(N)$.  It is therefore free and
has infinite index.  Applying Lemma~\ref{lem:window} to $S_{M,N}$ now
gives the required bound, which tends to zero as $M\to\infty$.

{
The dynamical argument for linear groups begins with a reduction to
local fields.  We first restrict to the finitely generated coefficient
field determined by the entries of the generators and their inverses.
Proposition~\ref{prop:projective-input} then supplies an embedding of
this field into a local field $k$ and a suitable projective
representation of the Zariski closure over $k$.  This allows us to use
compactness of projective space over $k$ and projective contraction.
\par}

What makes this construction possible for linear groups is the refined
projective dynamics developed by Breuillard and the first author in
\emph{On dense free subgroups of Lie groups}~\cite{BG-dense} and
\emph{A topological Tits alternative}~\cite{BG-topological}, and by the
first author and Glasner in \emph{Countable primitive groups}~\cite{GG}.
These tools supply the Schottky group and the control of the two-sided
products required above.  Non-elementary convergence actions on compact
metrizable spaces fit precisely the same framework: the algebraic
argument is unchanged, and the dynamical part is much simpler, since
opposition is replaced by distinctness of points and the needed
contraction follows directly from convergence dynamics.  We carry out
this extension in Section~\ref{sec:convergence-extension}.
\par}

\section{The projective dynamics and algebraic input}
\label{sec:algebraic}

For the remainder of the proof, assume that $\Gamma$ has property~$(T)$ and
fix a minimal generating tuple $s_1,\ldots,s_d$.  Replacing the coefficient
field by the subfield $K$ generated over its prime field by the matrix entries
of the $s_i$ and their inverses, we may assume that the field $K$ is
finitely generated.  Put
\[
 \mathbf H=\Zcl{\Gamma},
 \qquad
 \Gamma^\circ=\Gamma\cap\mathbf H^\circ.
\]

\begin{lemma}\label{lem:nonsolvable-closure}
The identity component $\mathbf H^\circ$ is not solvable, and
$\Gamma^\circ$ is Zariski dense in $\mathbf H^\circ$.
\end{lemma}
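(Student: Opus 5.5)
The second assertion is the easier one, so I will prove it first. Since $\mathbf H=\Zcl{\Gamma}$ is a linear algebraic group, $\mathbf H^\circ$ has finite index in $\mathbf H$, and $\Gamma^\circ=\Gamma\cap\mathbf H^\circ$ has finite index in $\Gamma$. Take coset representatives $\gamma_1,\dots,\gamma_m$ of $\Gamma^\circ$ in $\Gamma$. Then $\Gamma=\bigcup_k\gamma_k\Gamma^\circ$, so
\[
\mathbf H=\Zcl{\Gamma}=\bigcup_k\gamma_k\,\Zcl{\Gamma^\circ}.
\]
The group $\Zcl{\Gamma^\circ}$ is a closed subgroup of $\mathbf H$ of finite index, hence it contains $\mathbf H^\circ$. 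It is also contained in $\mathbf H^\circ$, because $\Gamma^\circ\subseteq\mathbf H^\circ$ and $\mathbf H^\circ$ is closed. Therefore $\Zcl{\Gamma^\circ}=\mathbf H^\circ$.

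For non-solvability I will argue by contradiction, using property~$(T)$, which is assumed from this section on. Suppose $\mathbf H^\circ$ is solvable. Then $\Gamma^\circ$ is a solvable group, since it is a subgroup of the solvable group $\mathbf H^\circ(F)$. Property~$(T)$ passes to finite-index subgroups, so $\Gamma^\circ$ has property~$(T)$. A finite-index subgroup of a finitely generated group is finitely generated, so $\Gamma^\circ$ is finitely generated.

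A finitely generated solvable group is amenable. An amenable group with property~$(T)$ is finite, since the trivial representation is weakly contained in the regular one and must then be isolated. Hence $\Gamma^\circ$ is finite, and so $\Gamma$ is finite. This contradicts the hypothesis that $\Gamma$ is infinite. In fact the contradiction comes even earlier: $\Gamma^\circ$ is finite, so its Zariski closure $\mathbf H^\circ$ would be finite and connected, hence trivial, which again forces $\Gamma$ to be finite.

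The only point needing care is the standard fact that property~$(T)$ together with amenability forces finiteness; I will simply cite it. An alternative route, avoiding amenability, uses the fact that a solvable group with property~$(T)$ has finite abelianization at every step of its derived series, and then proceeds by induction on derived length. I expect no genuine obstacle. The only thing to check is that no characteristic assumption enters, and none does: both facts used, the finite index of $\mathbf H^\circ$ in $\mathbf H$ and the amenability of solvable groups, hold over an arbitrary field.
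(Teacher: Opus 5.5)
Your proposal is correct and follows the paper's proof. Non-solvability comes from solvable $\Rightarrow$ amenable and amenable plus property~$(T)$ $\Rightarrow$ finite: you apply this to $\Gamma^\circ$ after passing $(T)$ to a finite-index subgroup, whereas the paper applies it directly to the virtually solvable $\Gamma$. Density comes from the finite coset decomposition in both, the only cosmetic difference being that you conclude $\Zcl{\Gamma^\circ}=\mathbf H^\circ$ via ``a closed subgroup of finite index contains the identity component'' where the paper compares dimensions.
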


\begin{proof}
If $\mathbf H^\circ$ were solvable, then $\Gamma$ would be virtually
solvable and hence amenable.  An amenable discrete group with
property~$(T)$ is finite, contrary to our hypothesis.

The subgroup $\Gamma^\circ$ has finite index in $\Gamma$.  Taking Zariski
closures in a finite coset decomposition of $\Gamma$ shows that
$\Zcl{\Gamma^\circ}$ has the same dimension as $\mathbf H$.  Since it is a
closed subgroup of $\mathbf H^\circ$, it equals $\mathbf H^\circ$.
\end{proof}

\paragraph{{\textbf{Projective-dynamics terminology.}}}
Let $k$ be a local field and let $V$ be a finite-dimensional $k$-vector
space.  We let $\operatorname{PGL}(V)$ act on $\mathbb P(V^*)$
contragrediently.  For $[\varphi]\in\mathbb P(V^*)$, write
\[
 [\varphi]^\perp=\mathbb P(\ker\varphi)\subset\mathbb P(V).
\]
We use \emph{proximal} and \emph{very proximal} in the sense of
\cite[Section~3]{BG-topological} and \cite[Section~7.2]{GG}.
For the standard projective metric $d$, write $N_\varepsilon(A)$ for the
$\varepsilon$-neighborhood of a subset $A$.
A transformation $g$ is $(r,\varepsilon)$-proximal if $r>2\varepsilon>0$
and there are a point $v_g$ and a hyperplane $H_g$ such that
\[
 d(v_g,H_g)\ge r,
 \qquad
 g\bigl(\mathbb P(V)\setminus N_\varepsilon(H_g)\bigr)
 \subset N_\varepsilon(\{v_g\}).
\]
It is proximal if this holds for some such $r,\varepsilon$.
It is $(r,\varepsilon)$-very proximal if both $g$ and $g^{-1}$ are
$(r,\varepsilon)$-proximal, and very proximal if both are proximal.
For the proximal
transformations occurring below, the parameters are chosen so that
\cite[Lemma~3.2]{BG-topological} applies.  We denote the canonical
attracting point and canonical repelling hyperplane of $g$ by
$\overline v_g$ and $\overline H_g$, respectively.  Thus the positive
powers of $g$ converge locally uniformly to $\overline v_g$ off
$\overline H_g$, and positive powers have the same canonical attracting
point and repelling hyperplane.

The partial flag variety of incident lines and hyperplanes is
\[
 \mathcal I(V)=
 \bigl\{(\ell,[\varphi])\in\mathbb P(V)\times\mathbb P(V^*):
          \ell\subset[\varphi]^\perp\bigr\}.
\]
We identify $[\varphi]$ with the hyperplane
$H=[\varphi]^\perp$ and write such a partial flag as $(\ell,H)$.  If $g$
is very proximal, then
$\overline v_g\in\overline H_{g^{-1}}$ and
$\overline v_{g^{-1}}\in\overline H_g$.  The two partial flags associated
with $g$ are therefore
\[
 \mathfrak f^+(g)=(\overline v_g,\overline H_{g^{-1}}),
 \qquad
 \mathfrak f^-(g)=(\overline v_{g^{-1}},\overline H_g).
\]
For $m\ge1$ and $h\in\operatorname{PGL}(V)$, these flags satisfy
\[
 \mathfrak f^\pm(g^m)=\mathfrak f^\pm(g),
 \qquad
 \mathfrak f^\pm(hgh^{-1})=h\mathfrak f^\pm(g).
\]
For a nontrivial element $g$ of a free group $F$, we write
$g^+,g^-\in\partial F$ for its attracting and repelling points in the
Gromov boundary.  These boundary points are distinct from the projective
flags $\mathfrak f^\pm(g)$.

A \emph{ping-pong pair} $a,b$ consists of very proximal
transformations with common parameters $r>2\varepsilon>0$ such that
$d(v_h,H_g)\ge r$ whenever
$g,h\in\{a,a^{-1},b,b^{-1}\}$ and $h\ne g^{-1}$.
Thus the attracting $\varepsilon$-neighborhood of $h$ avoids the
repelling $\varepsilon$-neighborhood of $g$, and $g$ sends it into its own
attracting neighborhood.  Requiring these conditions in both
$\mathbb P(V)$ and $\mathbb P(V^*)$ gives a \emph{projective Schottky group}
$\langle a,b\rangle$, freely generated by $a,b$ by the ping-pong lemma.
All locally uniform convergence below
is uniform on compact subsets of the indicated open domain.

We first isolate the algebraic and analytic input taken from \cite{BG-topological}.

\begin{proposition}[Projective Tits input]
\label{prop:projective-input}
Let $K$ be a field finitely generated over its prime field, let $\mathbf H$
be a linear algebraic $K$-group such that $\mathbf H^\circ$ is not
solvable, and let $\Delta\le\mathbf H(K)$ be finitely generated and Zariski
dense.  Put $\Delta^\circ=\Delta\cap\mathbf H^\circ$.  Then there exist
\begin{enumerate}[label=(\roman*)]
\item a local field $k$ and an embedding $K\hookrightarrow k$;
\item a finite-dimensional $k$-vector space $V$ and an algebraic
      projective representation
      \[
        \rho:\mathbf H(k)\longrightarrow\operatorname{PGL}(V);
      \]
\item an $\mathbf H$-invariant closed projective subvariety over $k$
      \[
        \mathcal X\subset\mathcal I(V);
      \]
\item $r>0$;            
\end{enumerate}
such that the projective action $\rho(\mathbf H(k))$ {on $\mathbb P(V)$} is strongly
irreducible,  
and for every sufficiently small
$\varepsilon>0$, the group $\Delta^\circ$ contains $\delta$ such that
$\rho(\delta)$ is $(r,\varepsilon)$-very proximal and
\[
 \mathfrak f^+(\rho(\delta)),\mathfrak f^-(\rho(\delta))\in\mathcal X.
\]
\end{proposition}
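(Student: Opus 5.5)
We follow the proof of the topological Tits alternative in \cite{BG-topological}, adding the bookkeeping needed to place the flags in an invariant subvariety $\mathcal X$.

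\emph{Reduction to a simple quotient.} Since $\mathbf H^\circ$ is not solvable, the quotient of $\mathbf H^\circ$ by its solvable radical is a nontrivial semisimple group. We pass to its adjoint group and then to a simple factor $\mathbf S$, defined over a finite extension of $K$. Because $\Delta^\circ$ is Zariski dense in $\mathbf H^\circ$, as in Lemma~\ref{lem:nonsolvable-closure}, its image in $\mathbf S$ is Zariski dense. The image is also infinite, since $\mathbf S$ is positive-dimensional. The group $\mathbf H$ permutes the simple factors, so we cannot use a representation of $\mathbf S$ alone. We keep the whole adjoint quotient and act through the representation of $\mathbf H$ induced from the finitely many conjugates of the chosen factor. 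This conjugation action of $\mathbf H$ is what will make $\mathcal X$ invariant under $\mathbf H$ rather than only under $\mathbf H^\circ$.

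\emph{Choice of local field.} Since $\Delta$ is finitely generated, its image lies in a group over a finitely generated field. By the Breuillard--Gelander lemma \cite[Lemma~4.1]{BG-topological}, there is a local field $k\supset K$ in which some element of $\Delta^\circ$ has an eigenvalue of absolute value different from $1$ in $\mathbf S(k)$. Concretely, if the image were bounded in every completion, all eigenvalues would be roots of unity. Then the image would be virtually unipotent, hence virtually solvable, which contradicts Zariski density in a simple group.

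\emph{Representation and subvariety.} We choose the representation by the Tits construction. Take a $k$-split torus element $a$ with a non-unit eigenvalue, and let $\chi$ be a highest weight realizing its top $a$-eigenvalue. Tits' theorem gives an irreducible $k$-representation $V$ of $\mathbf S$, one for each Galois or conjugate factor, in which that top eigenspace is one-dimensional. We take the tensor or induced representation so that $\mathbf H(k)$ acts projectively on $\mathbb P(V)$. Let $\mathcal X$ be the $\mathbf H$-orbit closure of the pair (highest-weight line, lowest-weight hyperplane), that is, a union of closed orbits of partial flags. This set is a closed projective subvariety of $\mathcal I(V)$, and it is $\mathbf H$-invariant by construction. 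For strong irreducibility, irreducibility of $\mathbf S$ on $V$ gives irreducibility of $\mathbf H^\circ$. Any finite union of proper subspaces invariant under a finite-index subgroup of $\mathbf H(k)$ would be invariant under the connected group $\mathbf H^\circ$, which is impossible.

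\emph{Very proximal elements with flags in $\mathcal X$.} Here we invoke \cite[Prop.~3.8, Thm.~4.3]{BG-topological}. For a Zariski dense subgroup acting strongly irreducibly, if one element is proximal then, for fixed $r$ and every small $\varepsilon$, the subgroup contains $(r,\varepsilon)$-very proximal elements. These are obtained as products $\gamma_1 g^m\gamma_2 g^{-m}$, or as high powers $g^m$ of a proximal $g$, with suitable $\gamma_1,\gamma_2$ in general position. Applying this to $\Delta^\circ$ produces $\delta$.

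\emph{Main obstacle.} The delicate step is checking that $\mathfrak f^\pm(\rho(\delta))\in\mathcal X$ and that $r$ is uniform in $\varepsilon$. For $\mathfrak f^\pm(\rho(\delta))\in\mathcal X$, the attracting point of a proximal element is the limit of $g^m[v]$, and for $g\in\mathbf H(k)$ with a one-dimensional top Tits eigenspace this limit is a highest-weight line, i.e.\ $h\cdot v_\chi$ for some $h$. The same argument on $V^*$ handles the repelling hyperplane. Uniformity of $r$ comes from compactness of $\mathcal X$ and the fact that the Zariski dense $\Delta^\circ$ moves flags into general position. Following \cite{BG-topological}, $r$ is fixed first from the distance data of finitely many fixed elements, and only $\varepsilon\to0$ is then achieved by taking powers.
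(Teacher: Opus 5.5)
Your outline follows the same Breuillard--Gelander highest-weight route the paper relies on. However, there is a genuine gap at the step where you pass from the single simple factor $\mathbf S$ to a representation of all of $\mathbf H(k)$.

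If $V=\bigoplus_j V_j$ is the induced representation over the conjugate factors, each $V_j$ is $\mathbf H^\circ$-invariant and the union of the $\mathbb P(V_j)$ is $\mathbf H$-invariant. So $\rho$ is not strongly irreducible, and your claim that irreducibility of $\mathbf S$ on $V$ gives irreducibility of $\mathbf H^\circ$ fails.

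If instead $V=\bigotimes_j V_j$ is tensor-induced, strong irreducibility is fine. This is in effect what the paper does: it takes a highest weight invariant under $\mathbf H/\mathbf H^\circ$ and extends projectively via Schur intertwiners. But now the top eigenspace of an element of $\mathbf H^\circ(k)$ on $V$ is the tensor product of its top eigenspaces on the $V_j$. So the element is proximal on $V$ only if it is proximal on every $V_j$. Your local-field step produces an element with a non-unit eigenvalue in $\mathbf S$ alone, and its components in the other conjugate factors may be bounded. For example, take $\gamma=(g,1)$ in $\mathbf S\times\mathbf S$ with the swapping component: its top eigenspace is $\overline v_g\otimes V_2$. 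Hence the hypothesis ``one element is proximal'' of the step you quote from \cite{BG-topological} is never established for your $V$. The same problem arises if you tensor over Galois-conjugate factors.

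Closing this needs a simultaneous-proximality argument. Conjugate by coset representatives in $\Delta$ to get elements of $\Delta^\circ$ proximal on each $V_j$. Then combine them, for instance by products of high powers in general position in the style of Tits and Abels--Margulis--Soifer, into one element proximal on all $V_j$; very proximal elements then follow. This is exactly what the paper imports wholesale from \cite[Theorem~4.3]{BG-topological} in the form \cite[Theorem~7.6]{GG}. Once you cite that theorem, as you partly do, your own construction of $k$ and $V$ is redundant, and only the bookkeeping for the highest weight and $\mathcal X$ remains.

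A smaller point concerns the last step. Showing separately that $\overline v_g\in X^+$ and $\overline H_{g^{-1}}\in X^-$ does not place the incident pair in the closed orbit $\mathcal X$. The product $X^+\times X^-$ can contain several $\mathbf H^\circ$-orbits of incident pairs; this already happens for the adjoint representation of $\mathrm{SL}_3$. Instead, use that $\mathcal X$ is closed, $\mathbf H$-invariant and irreducible, and that $X^\pm$ span $V$ and $V^*$. Pick $(x,y)\in\mathcal X$, over a finite extension of $k$ if necessary, with $x\notin\overline H_g$ and $y$ off the repelling hyperplane of $g$ on $\mathbb P(V^*)$. Then $g^m(x,y)\to\mathfrak f^+(g)$, so $\mathfrak f^+(g)\in\mathcal X$; the same argument with $g^{-1}$ gives $\mathfrak f^-(g)\in\mathcal X$.
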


\begin{proof}
Theorem~4.3 of \cite{BG-topological}, in the form recorded as
\cite[Theorem~7.6]{GG}, gives a local
field $k$, an embedding $K\hookrightarrow k$, and a strongly irreducible
projective representation
\[
 \rho:\mathbf H(k)\longrightarrow\operatorname{PGL}(V)
\]
with the property that
$\Delta^\circ$ contains $(r,\varepsilon)$-very proximal elements for a
fixed $r>0$ and arbitrarily small $\varepsilon>0$.

We use the geometric structure supplied by the highest-weight construction
in \cite[Propositions~4.1--4.2 and the proof of Theorem~4.3]{BG-topological}.
The solvable radical acts projectively trivially, and the restriction to
$\mathbf H^\circ$ is the projectivization of an absolutely irreducible
highest-weight representation $\sigma$ factoring through a nontrivial
semisimple quotient.  Its highest weight is chosen to be invariant under
the action of the component group $\mathbf H/\mathbf H^\circ$ on dominant
weights.  This action is induced by conjugation, followed by an inner
adjustment preserving a fixed Borel subgroup and maximal torus.
Consequently, for every $h\in\mathbf H(k)$, the representations $\sigma$
and $x\mapsto\sigma(hxh^{-1})$ are equivalent.  Choose an intertwiner
$A_h\in\operatorname{GL}(V)$ satisfying
\[
 A_h\sigma(x)A_h^{-1}=\sigma(hxh^{-1})
 \qquad(x\in\mathbf H^\circ(k)).
\]
By Schur's lemma, $A_h$ is unique up to scalar, and $A_hA_g$ differs from
$A_{hg}$ by a scalar.  Thus $h\mapsto[A_h]$ defines a projective
representation of $\mathbf H(k)$ extending the projectivization of
$\sigma$, as in \cite[Proposition~4.2]{BG-topological}.

Let
$X^+\subset\mathbb P(V)$ and $X^-\subset\mathbb P(V^*)$ be the corresponding
closed highest-weight orbits.  The closed orbit of incident
line--hyperplane pairs in $X^+\times X^-$ is a partial flag variety
$\mathcal X\subset\mathcal I(V)$, invariant under $\mathbf H$.
The canonical attracting points and repelling hyperplanes of the very
proximal elements furnished by this construction lie in the corresponding
closed orbits; their associated flags belong to $\mathcal X$.
Since $\mathcal X$ is projective over the local field $k$, its set of
$k$-points is compact.
\end{proof}

We use $\mathcal X$ also for its compact space $\mathcal X(k)$ of
$k$-points.  

\medskip

\paragraph{\bf Opposition on $\mathcal X$.}
If $x=(\ell,H)$ and $y=(\ell',H')$ are in $\mathcal X$, define
\begin{equation}\label{eq:projective-opposition}
 x\pitchfork y
 \quad\Longleftrightarrow\quad
 \ell\notin H'\ \text{ and }\ \ell'\notin H,
 \qquad
 \Opp(x)=\{y\in\mathcal X:y\pitchfork x\}.
\end{equation}
For the highest-weight orbits occurring in the proof of
Proposition~\ref{prop:projective-input}, the relation $\pitchfork$ is
symmetric, $\mathbf H(k)$-invariant, and Zariski open, and
\[
 \mathcal O=\{(x,y)\in\mathcal X^2:x\pitchfork y\}
\]
is the unique big open geometric $\mathbf H^\circ$-orbit in
$\mathcal X\times\mathcal X$.  In particular, $\mathcal X$ and
$\mathcal O$ are irreducible, and $\Opp(x)$ is a nonempty dense
Zariski-open subset of $\mathcal X$ for every $x\in\mathcal X$.  The two
flags associated with each very proximal element supplied by
Proposition~\ref{prop:projective-input} form an opposite pair.

We henceforth suppress $\rho$ from the notation for the action of
$\mathbf H(k)$ on $\mathcal X$; in particular,
$\mathfrak f^\pm(g)$ abbreviates $\mathfrak f^\pm(\rho(g))$.

\begin{lemma}[Simultaneous bridges]
\label{lem:simultaneous-bridges}
Keep the hypotheses and notation of Proposition~\ref{prop:projective-input},
and let $E\subset\Delta$ be finite.  There is a very proximal element
$a\in\Delta^\circ$ whose associated flags
$\mathfrak f^+(a),\mathfrak f^-(a)\in\mathcal X$ satisfy
\begin{equation}\label{eq:seed-bridges}
 \mathfrak f^+(a)\pitchfork s\mathfrak f^-(a),
 \qquad s\in E.
\end{equation}
\end{lemma}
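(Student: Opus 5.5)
Start from a very proximal element $\delta\in\Delta^\circ$ given by Proposition~\ref{prop:projective-input}, with flags $\mathfrak f^\pm(\delta)\in\mathcal X$, and then conjugate it: for $h\in\Delta^\circ$, the element $a=h\delta h^{-1}$ is again very proximal, lies in $\Delta^\circ$, and has $\mathfrak f^\pm(a)=h\mathfrak f^\pm(\delta)$. Writing $x^\pm=\mathfrak f^\pm(\delta)$, condition \eqref{eq:seed-bridges} becomes
\[
 h x^+\pitchfork s h x^-,\qquad s\in E .
\]
For each $s\in E$, the set $U_s=\{g\in\mathbf H^\circ:\ (gx^+,sgx^-)\in\mathcal O\}$ is Zariski open in $\mathbf H^\circ$, because $\mathcal O$ is Zariski open in $\mathcal X^2$ and the map $g\mapsto(gx^+,sgx^-)$ is a morphism. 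Since $\mathbf H^\circ$ is irreducible, it suffices to show that each $U_s$ is nonempty. Then $\bigcap_{s\in E}U_s$ is a nonempty Zariski-open subset of $\mathbf H^\circ$. By Lemma~\ref{lem:nonsolvable-closure}, or directly from the hypothesis that $\Delta$ is Zariski dense so that $\Delta^\circ$ is dense in $\mathbf H^\circ$, this set contains some $h\in\Delta^\circ$. Taking $a=h\delta h^{-1}$ then proves the lemma.

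The main step is showing that $U_s\ne\emptyset$ for each $s\in E\subset\Delta\subset\mathbf H(k)$. Here I would use the fact that $\mathcal O$ is the big open $\mathbf H^\circ$-orbit, together with $x^+\pitchfork x^-$. If $s\in\mathbf H^\circ$, there is a direct route: $(gx^+,sgx^-)\in\mathcal O$ is equivalent to $(x^+,g^{-1}sgx^-)\in\mathcal O$. Since $\mathcal O$ is $\mathbf H$-invariant, this holds whenever $g^{-1}sg$ lies in the Zariski-open set $\{y:(x^+,yx^-)\in\mathcal O\}$, which contains $1$. For a general $s$, I would argue geometrically over $\overline k$. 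The set of $g$ with $gx^-\in s^{-1}\cdot\Opp(gx^+)$ is the preimage, under the orbit map $g\mapsto(gx^+,gx^-)$, of the set
\[
 \{(y,z)\in\mathcal X^2:\ y\pitchfork sz\}.
\]
This set is Zariski open and nonempty, since $\Opp$ is nonempty and $s$ acts as an automorphism of $\mathcal X$. The image of the orbit map is exactly $\mathcal O$, which is Zariski dense in the irreducible variety $\mathcal X^2$. Hence $\mathcal O$ meets this nonempty open set, and so $U_s\neq\emptyset$.

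One subtlety remains: rationality. We need $h$ to be a $k$-point, and in fact an element of $\Delta^\circ$. This is handled by Zariski density of $\Delta^\circ$ in $\mathbf H^\circ$, so it suffices that the open set be defined and nonempty over $\overline k$. The other point to check is that the orbit map from $\mathbf H^\circ$ actually reaches $\mathcal O$ through $(x^+,x^-)$. This is true because $x^+\pitchfork x^-$ holds (as recorded after \eqref{eq:projective-opposition}) and $\mathcal O$ is a single geometric $\mathbf H^\circ$-orbit.

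I expect the nonemptiness of each $U_s$ to be the only real content. Everything else is the standard trick that finitely many nonempty Zariski-open conditions in an irreducible group can be met simultaneously by a point of a Zariski-dense subgroup.
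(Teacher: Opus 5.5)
Your proof is correct and is essentially the paper's argument. You conjugate a very proximal $\delta$ by some $h\in\Delta^\circ$ lying, by Zariski density, in the preimage under the orbit map $g\mapsto(gx^+,gx^-)$ of the open condition ``$gx^+\pitchfork sgx^-$ for all $s\in E$''; your $\bigcap_s U_s$ is exactly the paper's $\Phi_{a_0}^{-1}(\mathcal W)$. The only difference is bookkeeping: you prove nonemptiness one $s$ at a time, using density of $\mathcal O$ in $\mathcal X^2$, and then intersect in the irreducible group $\mathbf H^\circ$, whereas the paper exhibits a point of $\mathcal W$ directly by choosing $x\in\Opp(y)\cap\bigcap_{s}\Opp(sy)$. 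Your aside for $s\in\mathbf H^\circ$ does not by itself produce a suitable $g$, but your general argument covers that case, so the aside can simply be dropped.
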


\begin{proof}
Choose a very proximal element $a_0\in\Delta^\circ$, and put
\[
 x_0=\mathfrak f^+(a_0),
 \qquad
 y_0=\mathfrak f^-(a_0).
\]
Thus $(x_0,y_0)\in\mathcal O$.  Set
\[
 \mathcal W=
 \{(x,y)\in\mathcal O:x\pitchfork sy\text{ for every }s\in E\}.
\]
This is Zariski open in $\mathcal O$.  It is nonempty: for any
$y\in\mathcal X$, the finite intersection
\[
 \Opp(y)\cap\bigcap_{s\in E}\Opp(sy)
\]
is a nonempty open subset of the irreducible variety $\mathcal X$; choose
$x$ in this intersection.  Then $(x,y)\in\mathcal W$.

Consider the orbit morphism
\[
 \Phi_{a_0}:\mathbf H^\circ\longrightarrow\mathcal O,
 \qquad h\longmapsto(hx_0,hy_0).
\]
Let $\bar k$ be an algebraic closure of $k$.  Since $\mathcal O_{\bar k}$
is a single $\mathbf H^\circ_{\bar k}$-orbit containing $(x_0,y_0)$,
the orbit morphism $\Phi_{a_0}$ is surjective after base change to
$\bar k$.  Hence $\Phi_{a_0}^{-1}(\mathcal W)$ is a nonempty Zariski-open
subset of $\mathbf H^\circ$.  Since $\Delta^\circ$ is Zariski dense in $\mathbf H^\circ$, we may
choose $h\in\Delta^\circ$ such that
$(hx_0,hy_0)\in\mathcal W$.  The conjugate $a=ha_0h^{-1}$ has the required
properties, since
$\mathfrak f^\pm(ha_0h^{-1})=h\mathfrak f^\pm(a_0)$.
\end{proof}

\begin{lemma}[Schottky completion]
\label{lem:schottky-completion}
Let $a\in\Delta^\circ$ be as in
Lemma~\ref{lem:simultaneous-bridges}.  There are $g\in\Delta^\circ$ and
$L\in\mathbb{N}$ such that
\[
 t=a^L,
 \qquad
 b=ga^Lg^{-1}
\]
form a ping-pong pair of very proximal transformations simultaneously in
$\mathbb P(V)$ and $\mathbb P(V^*)$.  In particular,
$B=\langle t,b\rangle$ is a rank-two projective Schottky group.
\end{lemma}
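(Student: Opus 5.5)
We will choose $g\in\Delta^\circ$ so that the four projective data of $a$ and of $gag^{-1}$ are in general position, and then take $L$ large so that the contraction of $a^L$ becomes strong enough for ping-pong. Write $x^\pm=\mathfrak f^\pm(a)=(\ell^\pm,H^\pm)$, where $\ell^+=\overline v_a$, $H^+=\overline H_{a^{-1}}$, $\ell^-=\overline v_{a^{-1}}$ and $H^-=\overline H_a$. Put $b_0=gag^{-1}$, whose flags are $gx^\pm$. The ping-pong conditions in $\mathbb P(V)$ ask that each attracting point $\ell\in\{\ell^\pm,g\ell^\pm\}$ lie off each repelling hyperplane of $a^{\pm1},b_0^{\pm1}$ other than its own partner. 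For $a$ alone these conditions are automatic, because $(x^+,x^-)\in\mathcal O$. The dual conditions in $\mathbb P(V^*)$ involve the same incidences, with the roles of lines and hyperplanes swapped. So the requirement is
\[
 gx^+\pitchfork x^+,\quad gx^+\pitchfork x^-,\quad gx^-\pitchfork x^+,\quad gx^-\pitchfork x^-,
\]
together with the analogous requirement for $g^{-1}$. By symmetry and $\mathbf H(k)$-invariance of $\pitchfork$, the $g^{-1}$ version is equivalent to the displayed one.

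\textbf{Step 1: choosing $g$.} Each condition $gx^\pm\pitchfork x^\pm$ defines a Zariski-open subset of $\mathbf H^\circ$. It is the preimage of $\Opp(x^\pm)$, which is nonempty, dense and open, under the orbit maps $h\mapsto hx^\pm$. These subsets are nonempty because $\mathbf H^\circ$ acts transitively on $\mathcal X$ geometrically; this transitivity is inherited from the transitivity of $\mathbf H^\circ$ on $\mathcal O$ used in Lemma~\ref{lem:simultaneous-bridges}. Since $\mathbf H^\circ$ is irreducible, the intersection of the four sets is nonempty and open. Because $\Delta^\circ$ is Zariski dense, we can pick $g\in\Delta^\circ$ in it. The bridges \eqref{eq:seed-bridges} are not needed here; they are reserved for later use with the generators.

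\textbf{Step 2: quantitative ping-pong.} The attracting points and repelling hyperplanes now form finitely many pairs that avoid each other. By compactness we obtain a uniform $r'>0$ with $d(\ell,H)\ge r'$ for every required pair, in $\mathbb P(V)$ and in $\mathbb P(V^*)$. Conjugation by the fixed $g$ is bi-Lipschitz. By \cite[Lemma~3.2]{BG-topological}, or directly from the locally uniform convergence of $a^{\pm n}$ to $\overline v_{a^{\pm1}}$ off $\overline H_{a^{\pm1}}$, for every $\varepsilon>0$ there is $L$ such that $a^{\pm L}$ is $(r',\varepsilon)$-proximal with attracting point $\ell^\pm$ and repelling hyperplane $H^\mp$. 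The same then holds for $b^{\pm1}=ga^{\pm L}g^{-1}$, with a comparable $\varepsilon$ and the conjugated data. Taking $\varepsilon<r'/2$, and small relative to the Lipschitz constant of $g$, gives a ping-pong pair in both $\mathbb P(V)$ and $\mathbb P(V^*)$. The ping-pong lemma then shows that $B=\langle t,b\rangle$ is free of rank two.

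\textbf{Main obstacle.} The main difficulty is keeping the proximality parameters uniform across the four transformations $t^{\pm1},b^{\pm1}$. They must share a single pair $(r,\varepsilon)$, in both the primal and the dual projective spaces. The delicate point is the dual statement: one must check that the contragredient action of $a^L$ contracts toward $H^+$ away from $\ell^-$. I would verify this by applying the primal argument to the contragredient representation. In that representation the roles of the two flags are exchanged, and the opposition conditions from Step~1 are symmetric under this exchange.
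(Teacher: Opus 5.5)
Your proposal is correct and follows the paper's proof: your four conditions $gx^\pm\pitchfork x^\pm$ are exactly the paper's requirement $(gx,gy)\in(A\times A)\cap\mathcal O$ with $A=\Opp(x)\cap\Opp(y)$, and you meet them the same way, via the orbit map and Zariski density of $\Delta^\circ$, before taking $L$ large. Your Step~2 spells out the quantitative ping-pong that the paper leaves implicit in ``for a sufficiently large integer $L$'': the uniform $r'$, bi-Lipschitz conjugation by $g$, and the contragredient check in $\mathbb P(V^*)$.
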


\begin{proof}
Put
\[
 x=\mathfrak f^+(a),
 \qquad
 y=\mathfrak f^-(a),
 \qquad
 A=\Opp(x)\cap\Opp(y).
\]
The set
$(A\times A)\cap\mathcal O$ is a nonempty Zariski-open subset of
$\mathcal O$.  Applying the same orbit map and using the density of
$\Delta^\circ$, choose $g\in\Delta^\circ$ such that
\[
 (gx,gy)\in(A\times A)\cap\mathcal O.
\]
Thus the four flags $x,y,gx,gy$ are pairwise opposite.  For a
sufficiently large integer $L$, the elements
\[
 t=a^L,
 \qquad
 b=ga^Lg^{-1}
\]
form a ping-pong pair of very proximal transformations simultaneously in
$\mathbb P(V)$ and $\mathbb P(V^*)$.  Hence $B=\langle t,b\rangle$ is a
rank-two projective Schottky group.
\end{proof}

\begin{lemma}[Schottky boundary dynamics]
\label{lem:schottky-boundary}
Let $B=\langle t,b\rangle$ be the projective Schottky group supplied by
Lemma~\ref{lem:schottky-completion}.  There is a continuous
$B$-equivariant embedding
\[
 \zeta:\partial B\longrightarrow\mathcal X
\]
with the following properties:
\begin{enumerate}[label=(\alph*)]
\item $\zeta(\xi)\pitchfork\zeta(\eta)$ whenever $\xi\ne\eta$;
\item for every nontrivial $w\in B$,
      \[
       \zeta(w^+)=\mathfrak f^+(w),
       \qquad
       \zeta(w^-)=\mathfrak f^-(w);
      \]
\item if $w_n\to\xi$ and $w_n^{-1}\to\eta$ in the Gromov compactification
      of $B$, then
      \[
        w_n\big|_{\Opp(\zeta(\eta))}\longrightarrow\zeta(\xi)
      \]
      locally uniformly.
\end{enumerate}
\end{lemma}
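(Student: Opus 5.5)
We construct $\zeta$ by the standard nested ping-pong neighbourhood argument, carried out on the flag variety $\mathcal X\subset\mathcal I(V)$ rather than on $\mathbb P(V)$ alone. Lemma~\ref{lem:schottky-completion} gives four attracting neighbourhoods $U_g$ for $g\in\{t,t^{-1},b,b^{-1}\}$. Here $U_g\subset\mathcal X$ is the set of flags $(\ell,H)$ with $\ell\in N_\varepsilon(v_g)$ and $H$ in the $\varepsilon$-neighbourhood of $H_{g^{-1}}$ in $\mathbb P(V^*)$. Since the ping-pong conditions hold simultaneously in $\mathbb P(V)$ and $\mathbb P(V^*)$, these sets are pairwise disjoint. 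Moreover $g$ maps the complement of $U_{g^{-1}}$ into $U_g$, and it does so with contraction: after enlarging $L$ if needed, $g$ is a strict contraction in the projective metric on that complement, by \cite[Lemma~3.2]{BG-topological}.

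A point $\xi\in\partial B$ is a reduced infinite word $g_1g_2\cdots$. We set
\[
 \zeta(\xi)=\bigcap_n g_1\cdots g_n\,\overline{U_{g_{n+1}}}.
\]
The sets in this intersection are nested compact sets, and their diameters decay geometrically by the contraction, so the intersection is a single point. This gives well-definedness, and continuity follows because cylinder sets are mapped into sets of small diameter. Equivariance $\zeta(w\xi)=w\zeta(\xi)$ is immediate from the definition after free reduction. For injectivity, two distinct words first differ at some letter, so their images lie in disjoint sets $g_1\cdots g_{n}U_{h}$ and $g_1\cdots g_nU_{h'}$.

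For (b), let $w$ be cyclically reduced (conjugation reduces the general case to this one, by equivariance of both sides). Then $w^+=www\cdots$, and $w$ maps $\overline{U_{g_1}}$ into itself contractingly. Hence $\zeta(w^+)$ is the unique fixed point of $w$ there, namely the attracting flag $\mathfrak f^+(w)$. The same argument applied to $w^{-1}$ gives $\zeta(w^-)=\mathfrak f^-(w)$.

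For (a), we need opposition rather than mere distinctness, so the plan must use the shape of the neighbourhoods. We first choose $\varepsilon$ so small that any flag in $U_g$ is opposite to any flag in $U_h$ for $g\ne h$. This is possible because the four base flags are pairwise opposite (they are $x,y,gx,gy$ from the proof of Lemma~\ref{lem:schottky-completion}), and opposition is an open condition. Now take $\xi\neq\eta$ with common prefix $u$, so that $\xi=u\xi'$ and $\eta=u\eta'$ with $\xi'$ and $\eta'$ starting in different letters. Then $\zeta(\xi')$ and $\zeta(\eta')$ lie in different $U$'s and are therefore opposite. Since $\pitchfork$ is $\mathbf H(k)$-invariant, $\zeta(\xi)\pitchfork\zeta(\eta)$. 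The only subtle case is $\xi'\in U_{g}$ and $\eta'\in U_{g^{-1}}$, which is also covered because $U_g$ and $U_{g^{-1}}$ are distinct sets.

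For (c), the main obstacle, write the reduced form $w_n=u_n c_n v_n^{-1}$, where $u_n\to\xi$ and $v_n\to\eta$ are the growing prefix and suffix. Fix a compact $C\subset\Opp(\zeta(\eta))$. We need a fixed $m$ with $v_n^{-1}$ mapping $C$ into the complement of $U_{h^{-1}}$, where $h$ is the first letter of $w_n$'s middle part. We argue by contradiction using compactness. Suppose the images of points of $C$ under $v_n^{-1}$, where $v_n$ agrees with $\eta$ to length $m$, stayed inside the dual neighbourhood. Then, pulling back by the first $m$ letters of $\eta$ and using the contraction of inverses, these points would converge to $\zeta(\eta)$'s repelling part, that is, they would not be opposite to $\zeta(\eta)$, contradicting $C\subset\Opp(\zeta(\eta))$. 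Once $v_n^{-1}C$ avoids the bad neighbourhood, the ping-pong maps it into $u_n\overline{U_{g}}$. The diameter of this set tends to zero, and it accumulates at $\zeta(\xi)$, which gives the uniform convergence. The delicate points I expect are making the opposition-to-neighbourhood comparison uniform, which needs a quantitative form of ``$\ell\notin H'$'' on compacts, and handling bounded cancellation when $w_n$ is not of the clean prefix/suffix form.
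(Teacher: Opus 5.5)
Your framework is the one the paper uses: its proof just cites the standard nested-domain construction. Your construction of $\zeta$, its equivariance and injectivity, and parts (a) and (b) are fine in substance.

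\textbf{The gap is in (c).} It comes from a false ping-pong claim stated near the start: it is not true that $g$ maps the complement of $U_{g^{-1}}$ into $U_g$. On $\mathcal X$ the repelling region of $g$ is a neighbourhood of the whole hypersurface $\mathcal X\setminus\Opp(\mathfrak f^-(g))$, not of the single flag $\mathfrak f^-(g)$. That hypersurface consists of the flags whose line lies near $H_g$ or whose hyperplane nearly contains $v_{g^{-1}}$. For instance, take $\mathcal X=\mathcal I(V)$ with $\dim V\ge3$ and $t=\mathrm{diag}(\lambda_1,\dots,\lambda_n)$, where $|\lambda_1|>|\lambda_2|\ge\dots\ge|\lambda_{n-1}|>|\lambda_n|$. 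The flag $([e_2],\langle e_2,\dots,e_n\rangle)$ is fixed by $t$ and lies in neither $U_{t^{-1}}$ nor $U_t$. What does hold is $g\overline{U_h}\subset U_g$ for $h\ne g^{-1}$, and that is all your nested sets use. But your reduction in (c), which gets $v_n^{-1}C$ into the complement of $U_{h^{-1}}$, does not place it where the next letter contracts. So the ping-pong cannot be continued from there.

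\textbf{What (c) actually needs.} Write $\eta=e_1e_2\cdots$ and let $\eta_m=e_1\cdots e_m$. You must show that for compact $C\subset\Opp(\zeta(\eta))$ there is a fixed $m$ such that $\eta_m^{-1}C$ lies in the contraction domain of the next letter $e_{m+1}^{-1}$. Concretely, the line component must avoid the $\varepsilon$-neighbourhood of $H_{e_{m+1}^{-1}}$, and the hyperplane component must satisfy the dual condition. Your contradiction sketch does not give this. The bad set is a neighbourhood of a hyperplane, and it meets the region where $\eta_m$ does not contract, so no Lipschitz contraction from the ping-pong controls its image under $\eta_m$.

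One way to close the gap is by duality:
\begin{enumerate}[label=(\arabic*)]
\item If $d(\eta_m^{-1}\ell,H_{e_{m+1}^{-1}})<\varepsilon$, then $\eta_m^{-1}\ell$ lies on a hyperplane $H'$ that is $O(\varepsilon)$-close to $H_{e_{m+1}^{-1}}$ in $\mathbb P(V^*)$.
\item Since $e_{m+1}\ne e_m^{-1}$, the dual ping-pong (with $\varepsilon$ small relative to $r$) places $\eta_mH'$ in the image under $e_1\cdots e_{m-1}$ of the $\varepsilon$-ball about $H_{e_m^{-1}}$ in $\mathbb P(V^*)$. That set has diameter tending to $0$, and its closure contains the hyperplane component $H_\eta$ of $\zeta(\eta)$.
\item Since $\ell\in\eta_mH'$, this forces $d(\ell,H_\eta)\to0$ uniformly. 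That is impossible for $(\ell,H)\in C$ once $m$ is large. The condition on $H$ is handled symmetrically.
\item With this $m$ fixed, reducedness of $w_n=u_nc_nv_n^{-1}$ lets the remaining letters play ping-pong. Hence $w_nC$ lands in the cylinder set $g_1\cdots g_{j-1}\overline{U_{g_j}}$ of $u_n=g_1\cdots g_j$, whose diameter tends to $0$ around $\zeta(\xi)$.
\end{enumerate}
Singular-value gaps for $\eta_m$ would serve equally well.

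Your worry about bounded cancellation is not an issue. A reduced $w_n$ can always be split without cancellation into a prefix of $\xi$, a middle part, and the inverse of a prefix of $\eta$, with both outer lengths tending to infinity.
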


\begin{proof}
The usual nested-domain construction gives a continuous equivariant
boundary embedding $\zeta:\partial B\to\mathcal X$, pairwise opposition of
distinct boundary flags, and the asserted locally uniform convergence on
opposition cells; see \cite[Section~3]{BG-topological}.  The same
construction identifies the images of the fixed points of every
nontrivial $w\in B$ with the two flags associated with $w$.
\end{proof}

Apply Proposition~\ref{prop:projective-input} with $\Delta=\Gamma$, then
Lemma~\ref{lem:simultaneous-bridges} with
$E=\{s_1,\ldots,s_d\}$, and finally
Lemmas~\ref{lem:schottky-completion}--\ref{lem:schottky-boundary}.  Fix the
resulting action, flag variety, Schottky pair $t,b$, and boundary embedding
$\zeta$ for the rest of the paper.  By
Lemma~\ref{lem:schottky-boundary}\textup{(b)},
\[
 \zeta(t^\pm)=\mathfrak f^\pm(t)=\mathfrak f^\pm(a),
\]
because taking a positive power does not change the canonical attracting
points or repelling hyperplanes.  Thus \eqref{eq:seed-bridges} becomes
\begin{equation}\label{eq:bridge-conditions}
 \zeta(t^+)\pitchfork s_i\zeta(t^-),
 \qquad 1\le i\le d.
\end{equation}

\section{The quasi-regular estimate}\label{sec:window}

The final estimate is group theoretic.  It is the finite-family form of
{Yao's} quasi-regular lemma \cite[Lemma~2.1]{Yao}.

\begin{lemma}[Quasi-regular estimate]\label{lem:window}
Let $\Lambda=\langle t,Y\rangle$, where $Y$ is finite.  For $M\ge0$ put
\[
 H_M=\bigl\langle t^jyt^{-j}:y\in Y,\ 0\le j\le M\bigr\rangle.
\]
If $[\Lambda:H_M]=\infty$, then
\[
 \kk(\Lambda,\{t\}\cup Y)\le\sqrt{\frac{2}{M+1}}.
\]
\end{lemma}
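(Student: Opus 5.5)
We work in the quasi-regular representation $\pi$ of $\Lambda$ on $\ell^2(H_M\backslash\Lambda)$, with right action $\pi(g)\delta_{Hx}=\delta_{Hxg^{-1}}$, and consider the unit vector
\[
 \xi=\frac{1}{\sqrt{M+1}}\sum_{j=0}^{M}\delta_{H_Mt^{j}}.
\]
The argument has three steps: show that $\pi$ has no nonzero invariant vectors, show that the $M+1$ cosets in the sum are distinct (so $\|\xi\|=1$), and compute the displacements by $t$ and by each $y\in Y$.

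\textbf{No invariant vectors.} Since $[\Lambda:H_M]=\infty$, the space $H_M\backslash\Lambda$ is infinite. The action on it is transitive, so an invariant $\ell^2$ function is constant. A constant $\ell^2$ function on an infinite set is zero. Hence $\mathcal H^\Lambda=\{0\}$, and $\pi$ is admissible in the definition of $\kappa$.

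\textbf{Distinctness of the cosets.} We expect this to be the main obstacle. Suppose $H_Mt^i=H_Mt^j$ with $0\le i<j\le M$, and set $p=j-i\ge1$; then $t^p\in H_M$. We show that $H_M$ then has finite index, contradicting the hypothesis. Put
\[
 K=\langle H_M,t\rangle=\langle t^jyt^{-j}:y\in Y,\ 0\le j\le M\rangle\cdot\langle t\rangle\text{-generated}=\Lambda,
\]
since $Y\subset H_M$ and $t\in K$. The plan is to prove that $H_M$ together with $t^p$ has finite index in $\Lambda$. Let
\[
 L=\bigl\langle t^{m}yt^{-m}:y\in Y,\ m\in\mathbb Z\bigr\rangle,
\]
which is normal in $\Lambda$ with $\Lambda=L\langle t\rangle$. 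The main claim is $t^{m}Yt^{-m}\subset H_M$ for all $m\in\mathbb Z$, which we prove by induction on $m$:
\[
 t^{M+1}yt^{-(M+1)}=t^{p}\bigl(t^{M+1-p}yt^{-(M+1-p)}\bigr)t^{-p}\in H_M,
\]
because $t^{\pm p}\in H_M$ and $0\le M+1-p\le M$. Extending in both directions of $m$ gives $L\le H_M$. Then $H_M\supseteq L\langle t^p\rangle$, and this subgroup has index at most $p$ in $\Lambda=L\langle t\rangle$. This contradicts $[\Lambda:H_M]=\infty$, so the cosets $H_Mt^j$, $0\le j\le M$, are pairwise distinct.

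\textbf{Displacements.} For each $y\in Y$ and $0\le j\le M$ we have
\[
 H_Mt^jy^{-1}=H_M\,(t^jy^{-1}t^{-j})\,t^j=H_Mt^j,
\]
so $\pi(y)\xi=\xi$. For $t$, the map $\pi(t)$ sends $\delta_{H_Mt^j}$ to $\delta_{H_Mt^{j-1}}$, which shifts the window $\{0,\dots,M\}$ to $\{-1,\dots,M-1\}$. Thus $\pi(t)\xi-\xi=(\delta_{H_Mt^{-1}}-\delta_{H_Mt^{M}})/\sqrt{M+1}$. If $H_Mt^{-1}=H_Mt^{M}$, this difference is $0$. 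Otherwise the two cosets are distinct and the norm is $\sqrt{2/(M+1)}$. In either case
\[
 \max_{s\in\{t\}\cup Y}\|\pi(s)\xi-\xi\|\le\sqrt{\frac{2}{M+1}},
\]
which proves the bound.
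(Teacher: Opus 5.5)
Your proof is correct and follows essentially the same route as the paper: the unit vector $\xi$ in $\ell^2(H_M\backslash\Lambda)$, distinctness of the window cosets via $t^p\in H_M$ forcing $\langle t^myt^{-m}:m\in\mathbb Z,\ y\in Y\rangle\le H_M$ and hence finite index, and the telescoping computation for $t$. The differences are minor. The paper also includes $H_Mt^{-1}$ in the distinctness argument and so gets the exact value $\sqrt{2/(M+1)}$, whereas you prove distinctness only for $0\le j\le M$ and bound $\|\delta_{H_Mt^{-1}}-\delta_{H_Mt^M}\|\le\sqrt2$ directly. Also, your ``extending in both directions'' step is cleaner as the paper's division with remainder $m=qp+i$, $0\le i<p$, and the garbled line defining $K$ is unused and can be deleted.
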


\begin{proof}
We first show that the right cosets
\begin{equation}\label{eq:window-cosets}
 H_Mt^{-1},H_M,H_Mt,\ldots,H_Mt^M
\end{equation}
are pairwise distinct.  Suppose otherwise that
$H_Mt^a=H_Mt^b$ for some $-1\le a<b\le M$.  Then
$t^r\in H_M$, where $r=b-a$ and $1\le r\le M+1$.

For every $y\in Y$ and $0\le i<r$, we have $i\le M$, and hence
$t^iyt^{-i}\in H_M$ by the definition of $H_M$.  Given any
$k\in\mathbb Z$, write $k=qr+i$ with $q\in\mathbb Z$ and $0\le i<r$.
Since $t^r\in H_M$, conjugation by $(t^r)^q$ preserves $H_M$, and therefore
\[
 t^kyt^{-k}
 =(t^r)^q(t^iyt^{-i})(t^r)^{-q}
 \in H_M.
\]
Let
\[
 N=\langle t^kyt^{-k}:k\in\mathbb Z,\ y\in Y\rangle.
\]
Then $N\triangleleft\Lambda$, the quotient $\Lambda/N$ is cyclic and
generated by the image of $t$, and both $N$ and $t^r$ are contained in
$H_M$.  Consequently
\[
 [\Lambda:H_M]
 \le [\Lambda:\langle N,t^r\rangle]
 \le r,
\]
contrary to the hypothesis that $H_M$ has infinite index.

Now use the right quasi-regular representation on
$\ell^2(H_M\backslash\Lambda)$,
\[
 \pi(g)\delta_{H_Mx}=\delta_{H_Mxg^{-1}}.
\]
It has no nonzero invariant vector because the transitive set
$H_M\backslash\Lambda$ is infinite.  Put
\[
 \xi_M=\frac1{\sqrt{M+1}}
       \sum_{j=0}^{M}\delta_{H_Mt^j}.
\]
If $y\in Y$ and $0\le j\le M$, then
\[
 t^jy^{-1}=(t^jy^{-1}t^{-j})t^j
\]
and $t^jy^{-1}t^{-j}\in H_M$.  Thus
$H_Mt^jy^{-1}=H_Mt^j$, so every $y\in Y$ fixes each summand of $\xi_M$.
On the other hand, \eqref{eq:window-cosets} gives
\[
 \|\pi(t)\xi_M-\xi_M\|=
 \|\frac1{\sqrt{M+1}}
 \bigl(\delta_{H_Mt^{-1}}-\delta_{H_Mt^M}\bigr)\|=\sqrt{\frac{2}{M+1}}.
\]
\end{proof}

\section{Schreier coordinates and boundary separation}\label{sec:tail}
Let $\langle t,b\rangle$ be a free group of rank $2$.\footnote{Recall the Schottky elements $t,b$ constructed in the last paragraph of \S 2.}
Write
\[
 b_j=t^jbt^{-j},\qquad j\in\mathbb Z.
\]
The kernel of the homomorphism
\[
 B=\langle t,b\rangle\longrightarrow\mathbb Z,
 \qquad t\longmapsto1,\quad b\longmapsto0,
\]
is freely based by $\{b_j:j\in\mathbb Z\}$; see
\cite[Chapter~I, Proposition~3.7]{LS}.  For $M\ge1$, put
\[
 J_M=\langle b_0,\ldots,b_M\rangle.
\]

\begin{lemma}[Free-factor boundary separation]
\label{lem:boundary-separation}
Let $I\subset\mathbb Z$ be finite and disjoint from
$\{0,\ldots,M\}$, and put
\[
 B_I=\langle b_j:j\in I\rangle.
\]
Then
\[
 \langle J_M,B_I\rangle=J_M*B_I.
\]
In $\partial B$, every point of $\partial B_I$ lies outside
$\partial J_M$ and has trivial stabilizer in $J_M$.  Moreover, distinct
points of $\partial B_I$ have disjoint $J_M$-orbits.
\end{lemma}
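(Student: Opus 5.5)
The plan is to exploit the fact that $\{b_j:j\in\mathbb Z\}$ is a free basis of the kernel $K=\ker(B\to\mathbb Z)$, and to deduce the boundary statements from standard facts about free factors. Since $I\cap\{0,\ldots,M\}=\emptyset$, the subsets $\{b_0,\ldots,b_M\}$ and $\{b_j:j\in I\}$ are disjoint subsets of a free basis of $K$. A subgroup generated by a subset of a free basis is freely generated by that subset. The subgroup generated by the union of two disjoint subsets is therefore the free product of the two subgroups. This gives $\langle J_M,B_I\rangle=J_M*B_I$, which is the first claim.

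For the boundary statements, put $F=J_M*B_I$, a finitely generated free group with basis $\{b_j:j\in\{0,\ldots,M\}\cup I\}$. Finitely generated subgroups of free groups are quasiconvex, so the inclusions $J_M,B_I,F\hookrightarrow B$ induce equivariant topological embeddings of boundaries. Hence $\partial J_M$ and $\partial B_I$ sit inside $\partial F\subset\partial B$, and it suffices to work in $\partial F$, viewed as the space of infinite reduced words in the basis of $F$.

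A point $\eta\in\partial B_I$ is an infinite reduced word using only letters $b_j^{\pm1}$ with $j\in I$. A point of $\partial J_M$ uses only letters with indices in $\{0,\ldots,M\}$. These sets are disjoint because the first letter already differs, unless the words are empty, which cannot happen for infinite words.

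For $h\in J_M\setminus\{1\}$, write $h$ as a nonempty reduced word in the $J_M$-letters. The product $h\eta$ is then an infinite reduced word with no cancellation, since $\eta$ begins with a $B_I$-letter. Its first letter is a $J_M$-letter, so $h\eta\ne\eta$. This shows that the stabilizer of $\eta$ in $J_M$ is trivial.

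By the same computation, $J_M\eta$ consists of $\eta$ together with words of the form (nonempty $J_M$-word)$\cdot\eta$. Suppose $h\eta=h'\eta'$ with $\eta,\eta'\in\partial B_I$ and $h,h'\in J_M$. Reading the reduced infinite word, its maximal initial $J_M$-segment is $h=h'$, and the tail is $\eta=\eta'$. So distinct points of $\partial B_I$ have disjoint $J_M$-orbits.

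The main point requiring care is the identification of $\partial F$ inside $\partial B$: the embedding must be injective, so that distinctness and non-equality computed in $\partial F$ persist in $\partial B$. This holds because a quasi-isometric embedding of finitely generated free groups, here the inclusion of a finitely generated subgroup of a free group, extends to a continuous injective map on Gromov boundaries. Equivariance of this map ensures that the $J_M$-action on $\partial F$ agrees with its action on $\partial B$. I would cite this standard fact rather than reprove it.
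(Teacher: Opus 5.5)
Your proof is correct and takes essentially the same route as the paper. Generation by disjoint subsets of the Schreier free basis gives $J_M*B_I$, quasiconvexity embeds $\partial(J_M*B_I)$ equivariantly in $\partial B$, and reduced normal forms show that the translates $h\,\partial B_I$ ($h\in J_M$) are pairwise disjoint and disjoint from $\partial J_M$; you simply write out the normal-form step (reading off the maximal initial $J_M$-segment) that the paper leaves implicit.
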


\begin{proof}
The free-product identity follows because $J_M$ and $B_I$ are generated
by disjoint subsets of the free basis $\{b_j:j\in\mathbb Z\}$.

Set $L=\langle J_M,B_I\rangle=J_M*B_I$.  Since $L$ is a finitely
generated subgroup of the free group $B$, it is quasiconvex, and hence
$\partial L$ embeds $L$-equivariantly in $\partial B$.  Reduced normal
forms in the free product $J_M*B_I$ show that the subsets
\[
 h\partial B_I,\qquad h\in J_M,
\]
are pairwise disjoint and disjoint from $\partial J_M$.  The three
assertions follow.
\end{proof}

\section{Relative ping-pong}\label{sec:pingpong}

{The following is the finite-family version of the relative ping-pong
criterion from \cite{Yao} (cf.\ \cite[Proposition~3.5]{Yao}).}  We include the argument because we
work over an arbitrary local field and allow several added cyclic factors.

\medskip

\paragraph{\bf $J$-separated families.}
Let $F$ be a free group, let $J<F$ be quasiconvex, and let $A$ be
finite.  A family
$((p_a,q_a))_{a\in A}$ in $(\partial F\setminus\partial J)^2$ is called
\emph{$J$-separated} if every $p_a$ and $q_a$ has trivial stabilizer in
$J$, and if the $2|A|$ orbits $Jp_a,Jq_a$ are pairwise disjoint.

\begin{proposition}[Finite-family relative ping-pong]
\label{prop:relative-pp}
Let $L$ be a group acting by homeomorphisms on $\mathcal X$, and let $F<L$
be a non-elementary projective Schottky group with a boundary embedding
\[
 \zeta:\partial F\longrightarrow\mathcal X
\]
satisfying properties \textup{(a)}--\textup{(c)} of
Lemma~\ref{lem:schottky-boundary}.  Let $J<F$ be a non-elementary
quasiconvex subgroup, and let $A$ be a finite set.

Let $((p_a,q_a))_{a\in A}$ be a $J$-separated family; we call $p_a$ and
$q_a$ respectively the limiting target and source.  For each $a\in A$,
let $(g_{a,N})_{N\ge1}$ be a sequence in $L$, not necessarily in $F$,
such that, as $N\to\infty$,
\begin{equation}\label{eq:dynamic-convergence}
 \begin{aligned}
 g_{a,N}x&\longrightarrow\zeta(p_a)
   &&\text{locally uniformly for }x\in\Opp(\zeta(q_a)),\\
 g_{a,N}^{-1}x&\longrightarrow\zeta(q_a)
   &&\text{locally uniformly for }x\in\Opp(\zeta(p_a)).
 \end{aligned}
\end{equation}
Then, for all sufficiently large $N$, every $g_{a,N}$ has infinite order
and the natural homomorphism
\[
 J*\mathop{*}_{a\in A}\langle g_{a,N}\rangle
 \longrightarrow
 \bigl\langle J,g_{a,N}:a\in A\bigr\rangle
\]
is an isomorphism.
\end{proposition}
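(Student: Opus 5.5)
The plan is a ping-pong argument in $\mathcal X$ with one "table" for $J$ and one pair of small neighbourhoods for each added cyclic factor, in the spirit of the relative ping-pong of \cite{Yao}.

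\textbf{Step 1: the $J$-table.} Since $J$ is quasiconvex and non-elementary, $\zeta(\partial J)$ is a compact subset of $\mathcal X$. The points $\zeta(p_a),\zeta(q_a)$ lie off it, because $\zeta$ is injective and $p_a,q_a\notin\partial J$. By Lemma~\ref{lem:schottky-boundary}(a), each of these points is opposite to every point of $\zeta(\partial J)$.

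Choose small closed neighbourhoods $U_a^+\ni\zeta(p_a)$ and $U_a^-\ni\zeta(q_a)$ with the following properties:
\begin{itemize}
\item the sets $hU_a^\pm$, $h\in J$, are pairwise disjoint and disjoint from all the other $h'U_{a'}^{\pm}$;
\item all of them lie inside $\Opp$ of the relevant points.
\end{itemize}
This is possible because the family is $J$-separated. The $2|A|$ points have disjoint $J$-orbits and trivial stabilizers, and $J$ acts properly discontinuously on $\partial F\setminus\partial J$ (cocompactly on the complement of the limit set in $\partial F$). By property (c), $h\zeta(\xi)\to\zeta(\partial J)$ as $h\to\infty$ in $J$, uniformly on compact sets off $\zeta(\partial J)$. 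Hence only finitely many $h$ can make translates of the small sets meet, and these finitely many cases are excluded by shrinking the neighbourhoods, using the trivial-stabilizer and disjoint-orbit hypotheses.

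Put $U=\bigcup_{a}(U_a^+\cup U_a^-)$ and $D=\bigcup_{h\in J}hU$. The key consequence is that every $h\in J\setminus\{1\}$ maps $U$ into $D\setminus U$. This is where the "relative" nature enters: the $J$-side of the ping-pong is the disjointness of the translates $hU$, not a contraction by elements of $J$.

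\textbf{Step 2: the $g$-tables.} Shrinking further, each $U_a^\pm$ should avoid the complements of the opposition cells in the appropriate way, so that
\[
D\setminus U_a^-\subset\Opp(\zeta(q_a))\quad\text{and}\quad D\setminus U_a^+\subset\Opp(\zeta(p_a)).
\]
The needed compactness is the harder point. The set $D$ is not compact, but its closure is $D\cup\zeta(\partial J)$, and $\zeta(\partial J)$ is opposite to $\zeta(q_a)$. So $\overline{D}\setminus\Int U_a^-$ is a compact subset of $\Opp(\zeta(q_a))$. I expect this to be the main obstacle: one must check that translates $hU_{a'}^\pm$ accumulate only on $\zeta(\partial J)$ and never on the non-opposite locus of $\zeta(q_a)$. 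This uses property (c) together with the opposition of distinct boundary flags.

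Given this, the locally uniform convergence \eqref{eq:dynamic-convergence} yields $N_0$ such that for $N\ge N_0$ and every $m\ge1$,
\[
g_{a,N}^{m}\bigl(\overline D\setminus U_a^-\bigr)\subset U_a^+\quad\text{and}\quad g_{a,N}^{-m}\bigl(\overline D\setminus U_a^+\bigr)\subset U_a^-.
\]
For $m=1$ this is direct. For $m>1$ one iterates, since $U_a^+\subset\overline D\setminus U_a^-$.

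\textbf{Step 3: ping-pong on a reduced word.} Take a reduced word $w$ alternating between nontrivial elements of $J$ and nonzero powers $g_{a,N}^{m}$. Conjugating if necessary, we may assume $w$ ends (on the right) with a power $g_{a,N}^m$, $m>0$. Pick a base point $x\in U_{a'}^+$ for some $a'\ne a$, or, if $|A|=1$, in a suitable translate $hU^+_a$ with $h\in J\setminus\{1\}$.

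Follow $x$ through the letters of $w$. Each $g$-power sends the current point into $U^{\pm}$, and each $J$-letter sends it into $D\setminus U$. Adjacent $g$-letters with the same index $a$ are separated by a nontrivial $J$-letter, which moves points off $U_a^\pm$, so the hypotheses of Step 2 hold at every stage. The final position is therefore in $U$ or in $D\setminus U$, and one checks that it differs from $x$. Hence $w\ne1$.

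Infinite order of $g_{a,N}$ is the single-letter case: $g_{a,N}^m$ maps a point of $D\setminus(U_a^+\cup U_a^-)$ into $U_a^+$, so $g_{a,N}^m\ne1$ for $m\ne0$.
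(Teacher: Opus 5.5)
Your proposal is essentially the paper's proof. In both, small neighbourhoods of the endpoints have all nontrivial $J$-translates landing in a $J$-table, with bounded elements of $J$ handled by the trivial-stabiliser and disjoint-orbit hypotheses and escaping ones by property~(c); then the inclusions from \eqref{eq:dynamic-convergence} are iterated to all nonzero powers and the standard multi-factor ping-pong lemma finishes. The paper avoids your Step~2 obstacle by the order of choices: it first fixes a compact neighbourhood $D$ of $\zeta\bigl(\partial J\cup\overline{(JP_0)\setminus P_0}\bigr)$, where $P_0=\{p_a,q_a:a\in A\}$, opposite to all the endpoint neighbourhoods, and only then shrinks those neighbourhoods so that $h\bigl(\bigcup_a(E_a^+\cup E_a^-)\bigr)\subset\Int(D)$ for $1\ne h\in J$.

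Your write-up needs two small repairs. In Step~2 the compact set should be $\overline{D\setminus U_a^-}$ rather than $\overline D\setminus\Int U_a^-$, since the frontier of $U_a^-$ can contain points not opposite to $\zeta(q_a)$. In Step~3, conjugate $w$ to begin and end with $g$-letters and always take the base point in $D\setminus U$: with $x\in U_{a'}^+$, a word beginning with a positive power of $g_{a',N}$ returns $x$ to $U_{a'}^+$, and nothing then distinguishes $wx$ from $x$.
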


\begin{proof}
The action of $J$ on $\partial F\setminus\partial J$ is properly
discontinuous.  Put
\[
 P_0=\{p_a,q_a:a\in A\},
 \qquad
 C_0=\partial J\cup
     \overline{(JP_0)\setminus P_0}^{\,\partial F}.
\]
The orbit and stabilizer assumptions imply that $C_0$ is compact and
disjoint from $P_0$.  Choose a compact neighborhood $D$ of $\zeta(C_0)$
and pairwise disjoint compact neighborhoods $E_a^+,E_a^-$ of
$\zeta(p_a),\zeta(q_a)$ such that $D$ is mutually opposite to every
one of these neighborhoods, and any two distinct neighborhoods among the
$E_a^\pm$ are mutually opposite.

After shrinking the neighborhoods $E_a^\pm$,
\begin{equation}\label{eq:J-ping}
 h\Bigl(\bigcup_{a\in A}(E_a^+\cup E_a^-)\Bigr)
 \subset\Int(D),\qquad 1\ne h\in J.
\end{equation}
Indeed, a failure gives either a bounded sequence in $J$, contradicting
the orbit and stabilizer conditions, or an escaping sequence $(h_n)$.
In the latter case, after passing to a subsequence, we have $h_n\to\xi$
and $h_n^{-1}\to\eta$ in the Gromov compactification of $F$, for some
$\xi,\eta\in\partial J$.  Since $\zeta(\eta)\in D$ and $D$ is mutually
opposite to every $E_a^\pm$, the union of these neighborhoods is a compact
subset of $\Opp(\zeta(\eta))$.  Lemma~\ref{lem:schottky-boundary}\textup{(c)}
therefore gives $h_nx\to\zeta(\xi)$ uniformly on this union.
As $\zeta(\xi)\in\zeta(\partial J)\subset\Int(D)$, the inclusion in
\eqref{eq:J-ping} holds for $h_n$ whenever $n$ is sufficiently large,
a contradiction.

For each $a\in A$, set $E_{p_a}=E_a^+$ and $E_{q_a}=E_a^-$.  
By \eqref{eq:dynamic-convergence}, for sufficiently
large $N$,
\begin{equation}\label{eq:g-ping}
 \begin{aligned}
 g_{a,N}\Bigl(D\cup
   \bigcup_{r\in P_0\setminus\{q_a\}}E_r\Bigr)&\subset\Int(E_a^+),\\
 g_{a,N}^{-1}\Bigl(D\cup
   \bigcup_{r\in P_0\setminus\{p_a\}}E_r\Bigr)&\subset\Int(E_a^-).
 \end{aligned}
\end{equation}
Iteration gives the same inclusions for every nonzero power, with the sign
selecting $E_a^+$ or $E_a^-$.  Equations
\eqref{eq:J-ping}--\eqref{eq:g-ping} are the ordinary multi-factor
ping-pong hypotheses.  Every nontrivial reduced word acts nontrivially,
proving the free-product conclusion and the infinitude of the cyclic
factors.
\end{proof}

Recall that $B=\langle t,b\rangle$ where $t,b$ are the Schottky elements constructed in \S 2.

\begin{lemma}[Sandwich dynamics]\label{lem:sandwich}
Let $u,v\in B$ be nontrivial and let $s\in\mathbf H(k)$.  If
\begin{equation}\label{eq:sandwich-transversality}
 \zeta(u^-)\pitchfork s\zeta(v^+),
\end{equation}
then, for $x_N=u^Nsv^N$, locally uniformly,
\[
 \begin{aligned}
 x_N\big|_{\Opp(\zeta(v^-))}&\longrightarrow\zeta(u^+),\\
 x_N^{-1}\big|_{\Opp(\zeta(u^+))}&\longrightarrow\zeta(v^-).
 \end{aligned}
\]
\end{lemma}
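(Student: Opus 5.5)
We prove the forward statement by composing the three maps $v^N$, $s$, $u^N$ and tracking a compact set through each of them; the inverse statement then follows by symmetry. The only dynamical input is Lemma~\ref{lem:schottky-boundary}(b),(c) applied to the powers $u^N$ and $v^N$.

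\textbf{Powers in $B$.} Since $u$ and $v$ are nontrivial in the free group $B$, we have $u^N\to u^+$ and $u^{-N}\to u^-$ in the Gromov compactification, and similarly for $v$. Lemma~\ref{lem:schottky-boundary}(c) therefore gives
\[
u^N\big|_{\Opp(\zeta(u^-))}\to\zeta(u^+),\qquad v^N\big|_{\Opp(\zeta(v^-))}\to\zeta(v^+)
\]
locally uniformly. The same lemma applied to the inverse sequences gives
\[
v^{-N}\big|_{\Opp(\zeta(v^+))}\to\zeta(v^-),\qquad u^{-N}\big|_{\Opp(\zeta(u^+))}\to\zeta(u^-).
\]

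\textbf{Forward convergence.} Let $K\subset\Opp(\zeta(v^-))$ be compact. By the convergence above, $v^NK$ lies in any prescribed neighbourhood $U$ of $\zeta(v^+)$ once $N$ is large. Since $s$ acts continuously, $sv^NK\subset sU$, which is a neighbourhood of $s\zeta(v^+)$. The hypothesis \eqref{eq:sandwich-transversality} says $s\zeta(v^+)\in\Opp(\zeta(u^-))$. Opposition is a Zariski-open condition on $\mathcal X$, hence open in the analytic topology of the compact space $\mathcal X(k)$. We may therefore choose $U$ so that $sU$ has compact closure $C\subset\Opp(\zeta(u^-))$. Then $u^N$ converges to $\zeta(u^+)$ uniformly on $C$. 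Since $x_NK=u^N(sv^NK)\subset u^NC$ for large $N$, we get $x_NK\to\zeta(u^+)$ uniformly, which is the first claim.

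\textbf{Inverse convergence.} Write $x_N^{-1}=v^{-N}s^{-1}u^{-N}$ and run the same argument. For compact $K\subset\Opp(\zeta(u^+))$, the set $u^{-N}K$ concentrates near $\zeta(u^-)$, so $s^{-1}u^{-N}K$ concentrates near $s^{-1}\zeta(u^-)$. We then need $s^{-1}\zeta(u^-)\in\Opp(\zeta(v^+))$. By $\mathbf H(k)$-invariance of $\pitchfork$ this is equivalent to $\zeta(u^-)\pitchfork s\zeta(v^+)$, up to the symmetry of $\pitchfork$, and that is exactly the hypothesis. Applying $v^{-N}$ uniformly on a compact neighbourhood inside $\Opp(\zeta(v^+))$ sends the set to $\zeta(v^-)$, which gives the second claim.

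\textbf{Main obstacle.} The only delicate step is the "compact neighbourhood inside an open opposition cell" argument. It requires that $\Opp(x)$ be open in $\mathcal X(k)$ with the analytic topology, which holds because Zariski-open sets are open there. It also uses that $\mathbf H(k)$ acts by homeomorphisms and preserves $\pitchfork$, as stated after \eqref{eq:projective-opposition}. Everything else is the standard composition of a uniformly convergent map with a continuous one.
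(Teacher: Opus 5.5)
Your proposal is correct and follows the paper's proof. You push a compact subset of $\Opp(\zeta(v^-))$ through $v^N$, then through $s$, using the transversality hypothesis to land in a compact subset of $\Opp(\zeta(u^-))$, then through $u^N$, and you treat $x_N^{-1}=v^{-N}s^{-1}u^{-N}$ the same way. Your explicit check that the inverse case needs $s^{-1}\zeta(u^-)\pitchfork\zeta(v^+)$, which follows from $\mathbf H(k)$-invariance of $\pitchfork$, spells out a step the paper covers only by ``the same reasoning''.
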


\begin{proof}
On compact subsets of $\Opp(\zeta(v^-))$, the maps $v^N$ converge to
$\zeta(v^+)$.  After applying $s$, condition
\eqref{eq:sandwich-transversality} puts the image in a compact subset of
$\Opp(\zeta(u^-))$, where $u^N$ converges to $\zeta(u^+)$.  The same
reasoning applies to $x_N^{-1}=v^{-N}s^{-1}u^{-N}$.  
\end{proof}

\section{Two-sided Schreier corrections}\label{sec:corrections}

\begin{lemma}[Correction blocks]
\label{lem:correction-blocks}
For every $M\ge1$, there are positive integers
\[
 r_1,\ldots,r_d,\qquad q_1,\ldots,q_d
\]
such that, on putting
\[
 u_i=b_{r_i},\qquad v_i=b_{-q_i},
\]
the following properties hold.
\begin{enumerate}[label=(\roman*)]
\item
\begin{equation}\label{eq:two-sided-opposition}
 \zeta(u_i^-)\pitchfork s_i\zeta(v_i^+),
 \qquad 1\le i\le d.
\end{equation}
\item The $2d$ integer intervals
\[
 [r_i,r_i+M],
 \qquad
 [-q_i,M-q_i],
 \qquad 1\le i\le d,
\]
are pairwise disjoint and are disjoint from $[0,M]$.
\end{enumerate}
\end{lemma}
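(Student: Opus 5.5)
The plan is to deduce the transversality condition \eqref{eq:two-sided-opposition} from the bridge conditions \eqref{eq:bridge-conditions} by a limiting argument, and then choose the integers with large spacing. The key observation is that conjugating by large powers of $t$ pushes the relevant boundary points of the $b_j$ towards the fixed points of $t$.

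First I would identify the boundary points. Since $u_i=b_{r_i}=t^{r_i}bt^{-r_i}$, we have $u_i^-=t^{r_i}b^-$ in $\partial B$. By $B$-equivariance of $\zeta$ this gives $\zeta(u_i^-)=t^{r_i}\zeta(b^-)$. Similarly $v_i=b_{-q_i}=t^{-q_i}bt^{q_i}$ gives $\zeta(v_i^+)=t^{-q_i}\zeta(b^+)$.

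Next I would compute the limits, using Lemma~\ref{lem:schottky-boundary}.
\begin{itemize}
\item In the free group $B$, the four points $t^\pm,b^\pm$ are pairwise distinct. Property (a) then gives $\zeta(b^-)\in\Opp(\zeta(t^-))$ and $\zeta(b^+)\in\Opp(\zeta(t^+))$.
\item Apply property (c) to $w_n=t^n$, for which $w_n\to t^+$ and $w_n^{-1}\to t^-$. This yields $t^r\zeta(b^-)\to\zeta(t^+)$ as $r\to\infty$.
\item Apply property (c) to $w_n=t^{-n}$, which has the roles of $t^\pm$ reversed. This yields $t^{-q}\zeta(b^+)\to\zeta(t^-)$ as $q\to\infty$.
\end{itemize}
Consequently, as $r,q\to\infty$ independently,
\[
 \bigl(t^{r}\zeta(b^-),\,s_i t^{-q}\zeta(b^+)\bigr)\longrightarrow\bigl(\zeta(t^+),\,s_i\zeta(t^-)\bigr).
\]

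The limit pair is opposite by \eqref{eq:bridge-conditions}. The relation $\pitchfork$ is Zariski open on $\mathcal X\times\mathcal X$, so it is open in the analytic topology of $\mathcal X(k)\times\mathcal X(k)$. Concretely, it is defined by the non-vanishing of the pairings $\varphi'(\ell)$ and $\varphi(\ell')$, which are continuous conditions. Since each $s_i$ acts continuously, there is therefore $R_0$ such that (i) holds for every $i$ whenever $r_i,q_i\ge R_0$. I expect this step, namely justifying the convergence via (c) and the openness of opposition in the $k$-topology, to be the only real content. Everything else is bookkeeping.

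Finally I would arrange the combinatorial condition (ii). Put $R=\max(R_0,M+1)$ and set
\[
 r_i=q_i=R+(i-1)(M+1),\qquad 1\le i\le d.
\]
Then the intervals $[r_i,r_i+M]$ are consecutive blocks of length $M+1$ lying in $[M+1,\infty)$, so they are pairwise disjoint and miss $[0,M]$. Likewise the intervals $[-q_i,M-q_i]$ lie in $(-\infty,-1]$, because $q_i\ge M+1$, and they are pairwise disjoint. Positive and negative intervals cannot meet, which gives (ii). Since all $r_i,q_i\ge R_0$, condition (i) also holds.
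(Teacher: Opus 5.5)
Your proposal is correct and follows essentially the paper's argument. Conjugating by large powers of $t$ pushes $\zeta(b_r^-)$ and $\zeta(b_{-q}^+)$ to $\zeta(t^+)$ and $\zeta(t^-)$. Openness of $\pitchfork$ together with the bridge conditions \eqref{eq:bridge-conditions} then gives (i) for all large $r_i,q_i$, and the integers are spaced out to get (ii).

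The differences are only cosmetic. You obtain the convergence directly in $\mathcal X$ from properties (a) and (c) of Lemma~\ref{lem:schottky-boundary}, whereas the paper gets it in $\partial B$ and then applies continuity of $\zeta$. You also give the explicit choice $r_i=q_i=R+(i-1)(M+1)$ instead of choosing the integers successively.
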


\begin{proof}
Since $t,b$ form a Schottky pair, $b^-\ne t^-$ and $b^+\ne t^+$.  Hence,
in $\partial B$,
\[
 t^rb^-\longrightarrow t^+,
 \qquad
 t^{-q}b^+\longrightarrow t^-
 \qquad\text{as }r,q\longrightarrow\infty.
\]
By continuity and equivariance of $\zeta$,
\[
 \zeta(b_r^-)\longrightarrow\zeta(t^+),
 \qquad
 \zeta(b_{-q}^+)\longrightarrow\zeta(t^-).
\]
Condition \eqref{eq:bridge-conditions} together with openness of opposition
gives that, for every $i$
\[
 \zeta(b_r^-)\pitchfork s_i\zeta(b_{-q}^+)
\]
whenever $r$ and $q$ are sufficiently large.  We may choose the $r_i$ and
$q_i$ successively, as large as necessary, so that
\eqref{eq:two-sided-opposition} holds and all the indicated intervals are
pairwise disjoint and avoid $[0,M]$.
\end{proof}

Fix choices supplied by Lemma~\ref{lem:correction-blocks}.  For
$1\le i\le d$ and $0\le j\le M$, define
\begin{equation}\label{eq:two-sided-endpoints}
 \begin{aligned}
 p_{ij}&=t^ju_i^+=b_{j+r_i}^+,\\
 q_{ij}&=t^jv_i^-=b_{j-q_i}^-.
 \end{aligned}
\end{equation}
Let
\[
 I_M=
 \bigcup_{i=1}^d
 \bigl(\{r_i,\ldots,r_i+M\}
       \cup\{-q_i,\ldots,M-q_i\}\bigr)
\]
and put
\[
 B_0=\langle b_k:k\in I_M\rangle.
\]

\begin{lemma}[Endpoint separation]
\label{lem:endpoint-separation}
The $2d(M+1)$ points in \eqref{eq:two-sided-endpoints} are pairwise
distinct and belong to $\partial B_0$.  Moreover, the family
\[
 ((p_{ij},q_{ij}))_{1\le i\le d,\,0\le j\le M}
\]
is $J_M$-separated in $\partial B$.
\end{lemma}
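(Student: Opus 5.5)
The plan is to reduce everything to the free-basis structure of $\ker(B\to\mathbb Z)$ and then invoke Lemma~\ref{lem:boundary-separation} with $I=I_M$.

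\emph{Membership.} By Lemma~\ref{lem:correction-blocks}(ii), every index $j+r_i$ lies in $[r_i,r_i+M]$, and every index $j-q_i$ lies in $[-q_i,M-q_i]$, for $0\le j\le M$. Hence each $b_{j+r_i}$ and $b_{j-q_i}$ is a basis element of $B_0$. Now $B_0$ is a finitely generated, hence quasiconvex, subgroup of $B$. So the attracting and repelling points of its nontrivial elements lie in $\partial B_0\subset\partial B$. The identities $t^ju_i^{\pm}=(t^ju_it^{-j})^{\pm}=b_{j+r_i}^\pm$ follow from equivariance of fixed points under conjugation.

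\emph{Distinctness.} All indices appearing in the list are pairwise distinct integers, again by the disjointness of the $2d$ intervals in Lemma~\ref{lem:correction-blocks}(ii). The $p$'s are the points $b_k^+$ and the $q$'s are the points $b_{k'}^-$, for distinct basis elements $b_k,b_{k'}$ of the free group $B_0$. Two distinct free basis elements $x\neq y$ of a free group have four distinct boundary points $x^\pm,y^\pm$, since otherwise they would generate a cyclic subgroup. Also $x^+\ne x^-$. Together these give that all $2d(M+1)$ points are distinct.

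\emph{Separation.} Since $I_M\cap\{0,\ldots,M\}=\emptyset$, Lemma~\ref{lem:boundary-separation} applies to $I=I_M$. It says that every point of $\partial B_0$ lies outside $\partial J_M$ and has trivial $J_M$-stabilizer, and that distinct points of $\partial B_0$ have disjoint $J_M$-orbits. Applying this to our $2d(M+1)$ distinct points gives exactly the definition of a $J_M$-separated family: the pairs lie in $(\partial B\setminus\partial J_M)^2$, the stabilizers are trivial, and the $2|A|$ orbits are pairwise disjoint.

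The only potential subtlety is the distinctness step, namely that $b_k^+$ never equals $b_{k'}^-$ or $b_{k'}^+$ for $k\ne k'$. I would justify it by reduced words in the basis of $B_0$: $b_k^+$ is represented by the infinite reduced word $b_kb_kb_k\cdots$, and these rays are distinct for different letters or signs.
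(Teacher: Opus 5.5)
Your proposal is correct and follows the paper's proof essentially step for step. You obtain the identities from conjugation-equivariance of fixed points, distinctness from the disjointness of the fixed-point sets of distinct Schreier basis elements (the paper phrases this as noncommensurable maximal cyclic subgroups), and membership in $\partial B_0$ from the indices lying in $I_M$; you then get $J_M$-separation from Lemma~\ref{lem:boundary-separation} with $I=I_M$. One small point: if you argue distinctness through reduced words in the basis of $B_0$, you should state explicitly that it transfers to $\partial B$ because $\partial B_0\hookrightarrow\partial B$ is injective.
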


\begin{proof}
For every $n\in\mathbb Z$, equivariance gives $b_n^\pm=t^nb^\pm$, so the
identities in \eqref{eq:two-sided-endpoints} hold.  Distinct members of the
Schreier basis $\{b_n:n\in\mathbb Z\}$ generate noncommensurable maximal
cyclic subgroups of the free group $B$.  Their fixed-point sets in
$\partial B$ are therefore disjoint.  The disjointness of the index blocks
proves that the displayed points are pairwise distinct.  Their indices all
belong to $I_M$, so the points lie in $\partial B_0$.

By construction, $I_M\cap\{0,\ldots,M\}=\varnothing$.  Lemma
\ref{lem:boundary-separation} now shows that the displayed points lie
outside $\partial J_M$, have trivial $J_M$-stabilizers, and have pairwise
disjoint $J_M$-orbits.  Thus the family is $J_M$-separated.
\end{proof}

Define
\begin{equation}\label{eq:corrected-generators}
 x_i(N)=u_i^Ns_iv_i^N,
 \qquad 1\le i\le d.
\end{equation}
For $0\le j\le M$, put
\[
 g_{ij,N}=t^jx_i(N)t^{-j}.
\]
Lemma~\ref{lem:sandwich} and equivariance give, as $N\to\infty$,
\begin{equation}\label{eq:window-dynamics}
 \begin{aligned}
 g_{ij,N}\big|_{\Opp(\zeta(q_{ij}))}
   &\longrightarrow\zeta(p_{ij}),\\
 g_{ij,N}^{-1}\big|_{\Opp(\zeta(p_{ij}))}
   &\longrightarrow\zeta(q_{ij}),
 \end{aligned}
\end{equation}
locally uniformly.

\begin{corollary}[A free family of conjugates]\label{cor:free-window}
For all sufficiently large $N$,
\[
 H_{M,N}=
 \left\langle
 t^jbt^{-j},\ t^jx_i(N)t^{-j}:
 0\le j\le M,\ 1\le i\le d
 \right\rangle
\]
is free on the displayed $(M+1)(d+1)$ elements.
\end{corollary}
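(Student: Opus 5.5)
We apply Proposition~\ref{prop:relative-pp} with $L=\mathbf H(k)$ acting on $\mathcal X$ through $\rho$, $F=B$, $J=J_M$, and index set $A=\{(i,j):1\le i\le d,\ 0\le j\le M\}$. The sequences are $g_{(i,j),N}=g_{ij,N}$, with limiting targets $p_{ij}$ and limiting sources $q_{ij}$.

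First we check the hypotheses on $B$ and $J_M$. The group $B$ is a non-elementary projective Schottky group. By Lemma~\ref{lem:schottky-boundary} it carries a boundary embedding $\zeta$ satisfying (a)--(c). The subgroup $J_M=\langle b_0,\ldots,b_M\rangle$ is finitely generated in the free group $B$, hence quasiconvex. Since $M\ge1$, it is free of rank $M+1\ge2$ on part of the Schreier basis, and therefore non-elementary.

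Next we check the $J_M$-separation and the dynamical hypothesis. Lemma~\ref{lem:endpoint-separation} says precisely that $((p_{ij},q_{ij}))$ is a $J_M$-separated family. The convergence \eqref{eq:window-dynamics} is exactly hypothesis \eqref{eq:dynamic-convergence}. To justify it, apply Lemma~\ref{lem:sandwich} to $u_i,v_i,s_i$; the transversality it needs is condition (i) of Lemma~\ref{lem:correction-blocks}. Then conjugate by $t^j$. This gives
\[
g_{ij,N}=t^ju_i^Ns_iv_i^Nt^{-j}.
\]
The map $t^j$ is a homeomorphism of $\mathcal X$ preserving $\pitchfork$. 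Moreover $t^j\zeta(u_i^\pm)=\zeta(t^ju_i^\pm)$, and likewise for $v_i$. So locally uniform convergence on $\Opp(\zeta(v_i^-))$ is carried to locally uniform convergence on $\Opp(\zeta(q_{ij}))$ with limit $\zeta(p_{ij})$. The inverse statement transfers the same way.

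The proposition then yields, for all large $N$,
\[
\langle J_M,g_{ij,N}\rangle\cong J_M*\mathop{*}_{i,j}\langle g_{ij,N}\rangle,
\]
and each cyclic factor is infinite. Since $J_M$ is free on $b_0,\ldots,b_M$, this free product is free on the $(M+1)+d(M+1)=(M+1)(d+1)$ displayed elements $t^jbt^{-j}$ and $t^jx_i(N)t^{-j}$. The group it generates is exactly $H_{M,N}$.

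The main obstacle is transporting the convergence through the conjugation by $t^j$. It requires that $\zeta$ be $B$-equivariant and that $\Opp$ be $\mathbf H(k)$-invariant, both of which are recorded earlier in the paper. Beyond that, the argument is bookkeeping: confirming that the abstract hypotheses of Proposition~\ref{prop:relative-pp} match the concrete data.
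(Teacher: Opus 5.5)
Your proposal is correct and follows the paper's proof. You apply Proposition~\ref{prop:relative-pp} with $F=B$, $J=J_M$ and the family indexed by $(i,j)$, get $J_M$-separation from Lemma~\ref{lem:endpoint-separation}, and get the dynamics \eqref{eq:window-dynamics} from Lemma~\ref{lem:sandwich} transported by $t^j$; the only difference is that you spell out the equivariance step and the count $(M+1)+d(M+1)$, which the paper leaves implicit.
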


\begin{proof}
The group $J_M$ is free on $b_0,\ldots,b_M$, non-elementary, and
quasiconvex in $B$.  By Lemma~\ref{lem:endpoint-separation}, the family
$((p_{ij},q_{ij}))$ is $J_M$-separated.  
Proposition~\ref{prop:relative-pp} apply to the family indexed by $(i,j)$, together with \eqref{eq:window-dynamics} gives
\[
 H_{M,N}\cong
 J_M*\mathop{*}_{\substack{1\le i\le d\\0\le j\le M}}
 \langle g_{ij,N}\rangle.
\]
\end{proof} 

\section{Proof of the theorem}\label{sec:proof}

\begin{proof}[Proof of Theorem~\ref{thm:main}]
As observed in the introduction, we may assume that $\Gamma$ has
property~$(T)$.  Fix $M\ge1$, make the choices in
Lemma~\ref{lem:correction-blocks}, and take $N=N(M)$ large enough for
Corollary~\ref{cor:free-window}.  Put
\[
 S_M=\{t,b,x_1(N),\ldots,x_d(N)\}\subset\Gamma.
\]
Equation \eqref{eq:corrected-generators} recovers every marked generator:
\[
 s_i=u_i^{-N}x_i(N)v_i^{-N}.
\]
Thus $S_M$ generates $\Gamma$ and $|S_M|\le d+2$.

The subgroup $H_{M,N}$ is a nonabelian free group.  In particular, it has infinite
index in $\Gamma$.
Apply
Lemma~\ref{lem:window} with
\[
 Y=\{b,x_1(N),\ldots,x_d(N)\}.
\]
We obtain
\[
 \kk(\Gamma,S_M)\le\sqrt{\frac{2}{M+1}}.
\]
Letting $M$ tend to infinity proves the theorem.
\end{proof}

{
\section{A nonlinear extension: convergence groups}
\label{sec:convergence-extension}

The linear structure is used above only to produce the dynamical package
at the end of Section~\ref{sec:algebraic}: a Schottky subgroup, a compact
boundary space with an opposition relation, and the simultaneous bridge
conditions.  Once that package is available, the remaining argument is
topological and group theoretic.  This observation gives an extension to
groups which need not be linear.

Recall that an action of a group on a compact metrizable space $X$ is a
\emph{convergence action} if every sequence of distinct group elements has
a subsequence $(g_n)$ for which there are points $a,r\in X$ such that
\[
   g_n\big|_{X\setminus\{r\}}\longrightarrow a
\]
locally uniformly.  Thus, for every compact
$C\subseteq X\setminus\{r\}$ and every neighborhood $U$ of $a$, one has
$g_nC\subseteq U$ for all sufficiently large $n$.  The action is
\emph{non-elementary} if its limit set $\Lambda$ has more than two points.
After restricting the action to $\Lambda$, it is again a convergence
action, and we work on $\Lambda$ from now on.  We use the standard facts
that $\Lambda$ is then infinite, perfect, and
minimal, and that the ordered fixed-point pairs of loxodromic elements are
dense in
\[
   \Lambda^{(2)}=\{(x,y)\in\Lambda^2:x\ne y\}.
\]
We also use the standard high-power construction of Schottky subgroups of
convergence groups.  These facts may be found in
\cite{Tukia,Bowditch-config}; see also \cite[Section~4]{GG}.  In
particular, the density assertion is \cite[Theorem~2R]{Tukia}.

\begin{lemma}[The convergence-group Schottky package]
\label{lem:convergence-schottky}
Let $\Gamma$ admit a non-elementary convergence action with limit set
$\Lambda$, and let $E\subseteq\Gamma$ be finite.  There are elements
$t,b\in\Gamma$ which generate a rank-two free group
$B=\langle t,b\rangle$ and a continuous $B$-equivariant embedding
\[
   \zeta:\partial B\longrightarrow\Lambda
\]
with the following properties:
\begin{enumerate}[label=(\alph*)]
\item $\zeta(\xi)\ne\zeta(\eta)$ whenever $\xi\ne\eta$;
\item for every nontrivial $w\in B$, the points $\zeta(w^+)$ and
      $\zeta(w^-)$ are respectively the attracting and repelling fixed
      points of $w$ on $\Lambda$;
\item if $w_n\to\xi$ and $w_n^{-1}\to\eta$ in the Gromov compactification
      of $B$, then
      \[
         w_n\big|_{\Lambda\setminus\{\zeta(\eta)\}}
         \longrightarrow\zeta(\xi)
      \]
      locally uniformly;
\item $\zeta(t^+)\ne s\zeta(t^-)$ for every $s\in E$.
\end{enumerate}
\end{lemma}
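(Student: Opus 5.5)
We mirror the linear construction of Section~\ref{sec:algebraic}: the pointwise-distinctness relation on $\Lambda$ plays the role of $\pitchfork$, and pairs of distinct points play the role of opposite flags. We first choose one loxodromic element $a$ that satisfies the bridge condition. We then obtain $b$ as a conjugate of a power of $a$, and finally take $t$ and $b$ to be high powers that play ping-pong.

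\textbf{Step 1 (bridge).} The set
\[
W=\{(x,y)\in\Lambda^{(2)}: x\ne sy \text{ for all } s\in E\}
\]
is open in $\Lambda^{(2)}$. It is also nonempty. Since $\Lambda$ is infinite, we can fix $y$ and choose $x$ outside the finite set $\{y\}\cup Ey$. By the density of loxodromic fixed-point pairs \cite[Theorem~2R]{Tukia}, there is a loxodromic $a\in\Gamma$ whose ordered pair (attracting point $a^+$, repelling point $a^-$) lies in $W$. Thus $a^+\ne s a^-$ for every $s\in E$.

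\textbf{Step 2 (second element).} We need $g\in\Gamma$ such that the four points $a^\pm, ga^\pm$ are pairwise distinct. Again by density of loxodromic fixed-point pairs, choose a loxodromic $c$ with $c^\pm\notin\{a^+,a^-\}$. Then take $g=c^m$ with $m$ large. Because $c^m\to c^+$ uniformly away from $c^-$, the points $ga^\pm$ are close to $c^+$, so they avoid $a^\pm$. They are also distinct from each other, since $g$ is injective.

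\textbf{Step 3 (ping-pong and boundary map).} Take disjoint small compact neighborhoods $U_{\pm}$ of $a^\pm$ and $V_\pm$ of $ga^\pm$. By convergence dynamics, for large $L$ the elements $t=a^L$ and $b=ga^Lg^{-1}$ satisfy
\[
t^{\pm1}(\Lambda\setminus U_\mp)\subset \Int U_\pm,\qquad b^{\pm1}(\Lambda\setminus V_\mp)\subset \Int V_\pm .
\]
The ping-pong lemma then shows that $B=\langle t,b\rangle$ is free of rank two. Define $\zeta(\xi)$ as the intersection of the nested compact sets obtained by applying successive prefixes of the reduced infinite word $\xi$ to the appropriate ping-pong domain. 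This is the standard construction in \cite{Tukia,Bowditch-config}. The sets shrink to points: a sequence of distinct group elements, after passing to a subsequence, converges to a point off a single repelling point, and that repelling point lies outside the domain to which these elements are applied.

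The intersection is well defined, continuous and equivariant. It is injective because distinct infinite words eventually enter disjoint ping-pong sets, which gives (a). Property (b) follows because $w^{\pm}$ is represented by the periodic word $w^{\pm\infty}$, and the nested intersection for that word is the attracting (respectively repelling) fixed point of $w$. Property (d) holds because $t^\pm=a^\pm$ (powers do not change fixed points), so Step 1 gives $\zeta(t^+)\ne s\zeta(t^-)$ for every $s\in E$.

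\textbf{Main obstacle: property (c).} Given $w_n\to\xi$ and $w_n^{-1}\to\eta$, pass to a convergent subsequence with attracting point $a'$ and repelling point $r'$. We must show that $a'=\zeta(\xi)$ and $r'=\zeta(\eta)$. Once this holds for every subsequence, the subsequence principle upgrades it to the whole sequence.

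For $a'$: write the reduced word $w_n=\alpha_n\beta_n$, where $\alpha_n$ is the maximal common prefix with $\xi$. Then $\alpha_n$ maps the ping-pong domain of its last letter into a small set shrinking to $\zeta(\xi)$. Points outside the single ping-pong set that $\beta_n$ cannot map in land in that domain, and this holds for the points of an open set. Since $w_n$ converges to a constant off one point, this forces $a'=\zeta(\xi)$.

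The same argument applied to $w_n^{-1}$ identifies its attracting point as $\zeta(\eta)$. For a convergence sequence, the attracting point of the inverse sequence equals the repelling point of the original, so $r'=\zeta(\eta)$.

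The bookkeeping in this step is the delicate part: the last letter of $\alpha_n$ varies with $n$, so we handle it by passing to subsequences where that letter is constant.
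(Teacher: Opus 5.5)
Your proposal is correct. Step~1 and the identification of the repelling point in (c) via the inverse sequence coincide with the paper. The route differs in two places.

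\textbf{The second generator.} The paper takes a loxodromic $c$ independent of $a$ and sets $b=c^\ell$. It then cites the standard convergence-group Schottky construction for freeness, for $\zeta$, and for (a)--(b). You mirror Lemma~\ref{lem:schottky-completion} instead, with $b=ga^Lg^{-1}$ and $g=c^m$. This is valid, but the conjugation buys nothing here. Your $c$ already has fixed points disjoint from $a^\pm$, so $a^L$ and $c^L$ play ping-pong directly. In the linear case, conjugation by a Zariski-generic $g$ was the device for putting the second pair of flags in general position. In the convergence setting, density of loxodromic fixed-point pairs does this outright.

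\textbf{Property (c).} The paper avoids word bookkeeping. It picks $\theta\in\partial B\setminus\{\eta\}$ with $\zeta(\theta)\ne q$. It then compares $w_n\zeta(\theta)\to p$ with $w_n\zeta(\theta)=\zeta(w_n\theta)\to\zeta(\xi)$. This uses only continuity and equivariance of $\zeta$ and the convergence dynamics of $B$ on $\partial B$.

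Your prefix argument works directly with the ping-pong sets instead. Write $X_y$ for the ping-pong set of a letter $y$ and $z$ for the last letter of $w_n$. You show that $w_n$ maps $\Lambda\setminus X_{z^{-1}}$ into the depth-$|\alpha_n|$ nested set of $\xi$. Two points in your write-up need tightening:
\begin{enumerate}[label=(\roman*)]
\item $\beta_n$ lands in $X_{y_1}$, where $y_1$ is the first letter of $\beta_n$, not in the ping-pong set of the last letter of $\alpha_n$. The argument still works, because $y_1$ is not the inverse of that last letter, so $\alpha_n$ maps $X_{y_1}$ into the nested set.
\item The letter you need to freeze by passing to a subsequence is the last letter of $w_n$, not of $\alpha_n$. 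Alternatively, skip the freezing and test on a point outside all four ping-pong sets, other than the repelling point.
\end{enumerate}

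\textbf{Trade-off.} Your version is more self-contained: it also supplies the nested-set construction of $\zeta$ and the shrinking argument that the paper only cites. The paper's argument is shorter once $\zeta$ is in hand.
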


\begin{proof}
Choose $y\in\Lambda$ and
\[
   x\in\Lambda\setminus
   \bigl(\{y\}\cup\{sy:s\in E\}\bigr).
\]
There are neighborhoods $U$ of $x$ and $V$ of $y$ such that
\[
   U\cap V=\varnothing,
   \qquad
   U\cap sV=\varnothing
   \quad(s\in E).
\]
By the density of loxodromic fixed-point pairs, there is a loxodromic
element $a\in\Gamma$ such that
\[
   a^+\in U,
   \qquad
   a^-\in V.
\]
In particular,
\begin{equation}\label{eq:convergence-bridges}
   a^+\ne sa^-,
   \qquad s\in E.
\end{equation}

Choose a loxodromic element $c$ independent from $a$.  By the usual
convergence-group Schottky construction, for some $\ell\ge1$ the elements
\[
   t=a^\ell,
   \qquad
   b=c^\ell
\]
generate a rank-two Schottky group.  The construction gives a continuous
$B$-equivariant boundary embedding
$\zeta:\partial B\to\Lambda$, and gives properties~\textup{(a)} and
\textup{(b)}.  Moreover,
\[
   \zeta(t^+)=a^+,
   \qquad
   \zeta(t^-)=a^-,
\]
so \eqref{eq:convergence-bridges} proves~\textup{(d)}.

We verify~\textup{(c)}.  Let
\[
   w_n\to\xi,
   \qquad
   w_n^{-1}\to\eta.
\]
The word lengths of the $w_n$ tend to infinity.  Every subsequence
therefore has a further subsequence of distinct terms which is a
convergence sequence on $\Lambda$; denote its attracting and repelling
points by $p$ and $q$.  Choose
$\theta\in\partial B\setminus\{\eta\}$ such that
$\zeta(\theta)\ne q$.  Convergence on the free-group boundary and
equivariance give
\[
   w_n\zeta(\theta)=\zeta(w_n\theta)
   \longrightarrow\zeta(\xi),
\]
whereas the convergence property gives
$w_n\zeta(\theta)\to p$.  Hence $p=\zeta(\xi)$.  Applying the same
argument to $(w_n^{-1})$ gives $q=\zeta(\eta)$: here we use the standard
fact that if a convergence sequence collapses to $p$ off $q$, then its
inverse sequence collapses to $q$ off $p$.

Thus every subsequence has a further subsequence which converges locally
uniformly to $\zeta(\xi)$ off $\zeta(\eta)$.  If the full sequence did not
have this property, a compact subset of
$\Lambda\setminus\{\zeta(\eta)\}$, a neighborhood of $\zeta(\xi)$, and a
further subsequence would contradict the preceding conclusion.  This
proves~\textup{(c)}.
\end{proof}

\begin{proposition}[Relative ping-pong for convergence actions]
\label{prop:convergence-relative-pp}
Let $L$ act by homeomorphisms on a compact metrizable space $\Lambda$,
and let $F<L$ be a rank-two free group admitting a continuous
$F$-equivariant embedding
\[
   \zeta:\partial F\longrightarrow\Lambda
\]
with properties~\textup{(a)} and~\textup{(c)} of
Lemma~\ref{lem:convergence-schottky}.  Let $J<F$ be a non-elementary
quasiconvex subgroup, let $A$ be finite, and let
$((p_a,q_a))_{a\in A}$ be a $J$-separated family.  Suppose that
$g_{a,N}\in L$ and, as $N\to\infty$,
\begin{equation}\label{eq:convergence-relative-dynamics}
 \begin{aligned}
 g_{a,N}x&\longrightarrow\zeta(p_a)
   &&\text{locally uniformly on }\Lambda\setminus\{\zeta(q_a)\},\\
 g_{a,N}^{-1}x&\longrightarrow\zeta(q_a)
   &&\text{locally uniformly on }\Lambda\setminus\{\zeta(p_a)\}.
 \end{aligned}
\end{equation}
Then, for all sufficiently large $N$, every $g_{a,N}$ has infinite order
and the natural homomorphism
\[
   J*\mathop{*}_{a\in A}\langle g_{a,N}\rangle
   \longrightarrow
   \langle J,g_{a,N}:a\in A\rangle
\]
is an isomorphism.
\end{proposition}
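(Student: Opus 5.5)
The plan is to mirror the proof of Proposition~\ref{prop:relative-pp} verbatim, with opposition replaced by distinctness of points. Throughout, ``$\Opp(z)$'' becomes $\Lambda\setminus\{z\}$, and ``mutually opposite compact sets'' becomes ``disjoint compact sets.'' By property~(a), $\zeta$ is injective; since $\partial F$ is compact, $\zeta$ is a homeomorphism onto its image, and closures computed in $\partial F$ map to closures in $\Lambda$.

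\textbf{Step 1 (setup).} Put $P_0=\{p_a,q_a:a\in A\}$ and
\[
 C_0=\partial J\cup\overline{(JP_0)\setminus P_0}^{\,\partial F}.
\]
Because $J$ is quasiconvex, $J$ acts properly discontinuously and cocompactly on $\partial F\setminus\partial J$. Combined with the $J$-separation hypotheses (trivial stabilizers and pairwise disjoint orbits), this shows that the accumulation points of $JP_0$ lie in $\partial J$. Hence $C_0$ is compact and disjoint from $P_0$. We then choose pairwise disjoint compact neighborhoods $E_a^+\ni\zeta(p_a)$ and $E_a^-\ni\zeta(q_a)$, together with a compact neighborhood $D$ of $\zeta(C_0)$ disjoint from all the $E_a^\pm$. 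This is possible because these are finitely many disjoint compact sets in a metrizable space.

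\textbf{Step 2 (the $J$-ping-pong inclusion).} We aim to show that, after shrinking the $E_a^\pm$,
\[
 h\Bigl(\bigcup_{a}(E_a^+\cup E_a^-)\Bigr)\subset\Int(D)\qquad\text{for all }1\ne h\in J.
\]
Argue by contradiction: take shrinking neighborhoods and elements $h_n\ne1$ that fail the inclusion. There are two cases.
\begin{enumerate}[label=(\roman*)]
\item If the $h_n$ stay in a finite set, pass to a constant $h$. Then $h$ maps some point of $\zeta(P_0)$ outside $\Int(D)$. But $hP_0\cap P_0=\varnothing$ by the orbit and stabilizer hypotheses, so $hP_0\subset(JP_0)\setminus P_0\subset C_0$, and $\zeta(C_0)\subset\Int(D)$, a contradiction.
\item If the $h_n$ escape, pass to a subsequence with $h_n\to\xi$ and $h_n^{-1}\to\eta$, where $\xi,\eta\in\partial J$. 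The union $\bigcup_a(E_a^+\cup E_a^-)$ is a compact set avoiding $\zeta(\eta)\in D$. Property~(c) then gives uniform convergence of $h_n$ on it to $\zeta(\xi)\in\Int(D)$, again a contradiction.
\end{enumerate}

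\textbf{Step 3 (the $g_{a,N}$-inclusions).} The set $D\cup\bigcup_{r\ne q_a}E_r$ is compact and, by disjointness, does not contain $\zeta(q_a)$. Hence \eqref{eq:convergence-relative-dynamics} gives, for large $N$,
\[
 g_{a,N}\Bigl(D\cup\bigcup_{r\ne q_a}E_r\Bigr)\subset\Int(E_a^+).
\]
Symmetrically, $g_{a,N}^{-1}$ maps $D\cup\bigcup_{r\ne p_a}E_r$ into $\Int(E_a^-)$. Iterating, the positive powers of $g_{a,N}$ map $D\cup\bigcup_{r\ne q_a}E_r$ into $E_a^+$, and the negative powers map $D\cup\bigcup_{r\ne p_a}E_r$ into $E_a^-$.

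\textbf{Step 4 (ping-pong).} The standard multi-factor ping-pong lemma now applies. Take a nontrivial reduced word in $J*\mathop{*}_a\langle g_{a,N}\rangle$, conjugated if necessary so that it starts and ends in $J$-syllables. Such a word maps a point of some $E_a^+\setminus\bigl(D\cup\bigcup_{r\ne p_a}E_r\bigr)$ into $\Int(D)$, so it acts nontrivially. In particular each $g_{a,N}$ has infinite order, because $g_{a,N}^m$ moves $D$ into $E_a^\pm$ and $D$ is disjoint from these sets.

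\textbf{Main obstacle.} The delicate point is Step~1: checking that $C_0$ is compact and avoids $P_0$. This uses proper discontinuity of $J$ on $\partial F\setminus\partial J$, namely that only finitely many $h\in J$ move a compact subset of $\partial F\setminus\partial J$ near a given point of that set. That is where quasiconvexity and the separation hypotheses enter. Once Step~1 is in place, the remaining steps are routine transcriptions of the projective argument.
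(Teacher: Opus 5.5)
Your proposal is correct and is essentially the paper's argument: the paper's proof declares $z\pitchfork w$ to mean $z\ne w$, so that $\Opp(z)=\Lambda\setminus\{z\}$, and then runs the proof of Proposition~\ref{prop:relative-pp} verbatim, which is exactly the transcription you carry out in Steps~1--4. Your extra details are sound: the limiting argument in the bounded case, and conjugating a reduced word so it begins and ends with $J$-syllables (which uses that $J$ is infinite). The cocompactness you assert in Step~1 is true for finitely generated subgroups of free groups but is never needed---only proper discontinuity is used.
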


\begin{proof}
Define $z\pitchfork w$ to mean $z\ne w$, so that
$\Opp(z)=\Lambda\setminus\{z\}$.  With this choice, the proof of
Proposition~\ref{prop:relative-pp} applies verbatim.  Indeed, compactness
and metrizability give the disjoint compact neighborhoods used there;
property~\textup{(c)} of Lemma~\ref{lem:convergence-schottky} controls
escaping sequences in $J$; and
\eqref{eq:convergence-relative-dynamics} supplies the remaining
ping-pong inclusions.
\end{proof}

\begin{lemma}[Sandwich dynamics for convergence actions]
\label{lem:convergence-sandwich}
Let $B<\Gamma$ and $\zeta:\partial B\to\Lambda$ be as in
Lemma~\ref{lem:convergence-schottky}.  If $u,v\in B$ are nontrivial,
$s\in\Gamma$, and
\[
   \zeta(u^-)\ne s\zeta(v^+),
\]
then, for $x_N=u^Nsv^N$, locally uniformly,
\[
 \begin{aligned}
 x_N\big|_{\Lambda\setminus\{\zeta(v^-)\}}
    &\longrightarrow\zeta(u^+),\\
 x_N^{-1}\big|_{\Lambda\setminus\{\zeta(u^+)\}}
    &\longrightarrow\zeta(v^-).
 \end{aligned}
\]
\end{lemma}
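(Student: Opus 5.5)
The plan is to follow the proof of Lemma~\ref{lem:sandwich}, with opposition cells replaced by complements of points, as in Proposition~\ref{prop:convergence-relative-pp}. First I need the dynamics of powers of a single nontrivial $w\in B$. Apply Lemma~\ref{lem:convergence-schottky}\textup{(c)} to $w_N=w^N$. In the Gromov compactification of the free group $B$ we have $w^N\to w^+$ and $w^{-N}\to w^-$. Hence
\[
 w^N\big|_{\Lambda\setminus\{\zeta(w^-)\}}\longrightarrow\zeta(w^+),
 \qquad
 w^{-N}\big|_{\Lambda\setminus\{\zeta(w^+)\}}\longrightarrow\zeta(w^-),
\]
both locally uniformly. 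The second statement follows by applying \textup{(c)} to $w^{-1}$, since $(w^{-1})^\pm=w^\mp$.

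For the forward statement, fix a compact set $C\subset\Lambda\setminus\{\zeta(v^-)\}$ and a neighborhood $U$ of $\zeta(u^+)$. Since $\zeta(u^-)\ne s\zeta(v^+)$ and $\Lambda$ is Hausdorff, I would choose a compact neighborhood $W$ of $\zeta(v^+)$ with $sW\subset\Lambda\setminus\{\zeta(u^-)\}$. The set $sW$ is compact because $s$ acts by a homeomorphism. By the forward convergence for $v$, we have $v^NC\subset W$ for large $N$, and therefore $sv^NC\subset sW$. By the forward convergence for $u$, applied to the compact set $sW$, we get $u^NsW\subset U$ for large $N$. Together these give $x_NC\subset U$, which is the claimed locally uniform convergence.

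The inverse statement is the same argument applied to $x_N^{-1}=v^{-N}s^{-1}u^{-N}$. Take a compact set $C\subset\Lambda\setminus\{\zeta(u^+)\}$. Then $u^{-N}C$ eventually lies in a small compact neighborhood $W'$ of $\zeta(u^-)$, chosen so that $s^{-1}W'$ avoids $\zeta(v^+)$. Such a $W'$ exists because the hypothesis is equivalent to $s^{-1}\zeta(u^-)\ne\zeta(v^+)$. Finally, $v^{-N}$ sends the compact set $s^{-1}W'$ into any prescribed neighborhood of $\zeta(v^-)$.

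The only point needing care is that property \textup{(c)} requires the sequences $w^{\pm N}$ to converge to the correct boundary points, and that we use local uniformity rather than pointwise convergence. Both are immediate in a free group, so I expect no genuine obstacle.
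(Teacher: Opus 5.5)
Your proposal is correct and takes the same approach as the paper. In both, $v^N$ pushes a compact set off $\zeta(v^-)$ into a compact neighborhood of $\zeta(v^+)$, the hypothesis $\zeta(u^-)\ne s\zeta(v^+)$ keeps its $s$-image away from $\zeta(u^-)$, $u^N$ then contracts it to $\zeta(u^+)$, and $x_N^{-1}=v^{-N}s^{-1}u^{-N}$ is handled symmetrically; your derivation of the dynamics of $w^{\pm N}$ from property~(c) only spells out a step the paper leaves implicit.
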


\begin{proof}
On every compact subset of
$\Lambda\setminus\{\zeta(v^-)\}$, the maps $v^N$ converge uniformly to
$\zeta(v^+)$.  After applying $s$, the images are eventually contained
in a compact set disjoint from $\zeta(u^-)$, and $u^N$ converges uniformly
there to $\zeta(u^+)$.  This proves the first assertion.  For the inverse
observe that
\[
   \zeta(u^-)\ne s\zeta(v^+)
   \quad\Longleftrightarrow\quad
   s^{-1}\zeta(u^-)\ne\zeta(v^+).
\]
Applying the same argument to
$x_N^{-1}=v^{-N}s^{-1}u^{-N}$ proves the second assertion.
\end{proof}

\begin{theorem}[Convergence-group extension]
\label{thm:convergence-extension}
Let $\Gamma$ be a finitely generated group admitting a non-elementary
convergence action on a compact metrizable space.  If $d=d(\Gamma)$, then
\[
   \kk_{\le d+2}(\Gamma)=0.
\]
\end{theorem}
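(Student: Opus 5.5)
The plan is to repeat the proof of Theorem~\ref{thm:main} almost verbatim, replacing the projective package of Section~\ref{sec:algebraic} with the convergence-group package of Lemma~\ref{lem:convergence-schottky}. If $\Gamma$ fails property~$(T)$, then every Kazhdan constant vanishes and there is nothing to prove, so we assume $\Gamma$ has $(T)$. Fix a minimal generating tuple $s_1,\ldots,s_d$ and restrict the action to the limit set $\Lambda$. Apply Lemma~\ref{lem:convergence-schottky} with $E=\{s_1,\ldots,s_d\}$. This yields $t,b$, the free group $B=\langle t,b\rangle$, and the embedding $\zeta:\partial B\to\Lambda$ satisfying (a)--(d). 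Property~(d) is the analogue of \eqref{eq:bridge-conditions}, namely $\zeta(t^+)\ne s_i\zeta(t^-)$, with opposition read as distinctness.

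Next, fix $M\ge1$ and redo Lemma~\ref{lem:correction-blocks}. Since $b^\pm\ne t^\pm$, we have $t^rb^-\to t^+$ and $t^{-q}b^+\to t^-$ in $\partial B$. By continuity, $\zeta(b_r^-)\to\zeta(t^+)$ and $\zeta(b_{-q}^+)\to\zeta(t^-)$. Inequality is an open condition in the Hausdorff space $\Lambda$ and each $s_i$ is a homeomorphism. Hence $\zeta(b_r^-)\ne s_i\zeta(b_{-q}^+)$ for all large $r,q$, and we can choose $r_i,q_i$ successively so that the $2d$ intervals are pairwise disjoint and avoid $[0,M]$.

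Lemmas~\ref{lem:boundary-separation} and~\ref{lem:endpoint-separation} concern only the free group $B$ and its boundary, so they apply unchanged. They show that the family $((p_{ij},q_{ij}))$ is $J_M$-separated. Set $x_i(N)=u_i^Ns_iv_i^N$ with $u_i=b_{r_i}$ and $v_i=b_{-q_i}$. Lemma~\ref{lem:convergence-sandwich}, combined with equivariance under conjugation by $t^j$ (using $t^j\zeta(\cdot)=\zeta(t^j\cdot)$), gives the convergence \eqref{eq:convergence-relative-dynamics} for $g_{ij,N}=t^jx_i(N)t^{-j}$. The targets are $\zeta(p_{ij})$ and the sources are $\zeta(q_{ij})$.

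Proposition~\ref{prop:convergence-relative-pp}, applied with $F=B$, $J=J_M$ and $L=\Gamma$, then shows that for large $N$ the group $H_{M,N}$ is the free product $J_M*\mathop{*}_{i,j}\langle g_{ij,N}\rangle$. In particular it is a nonabelian free group. Any finite-index subgroup of $\Gamma$ inherits property~$(T)$, and an infinite free group does not have it, so $[\Gamma:H_{M,N}]=\infty$. The set $S_M=\{t,b,x_1(N),\ldots,x_d(N)\}$ generates $\Gamma$, since $s_i=u_i^{-N}x_i(N)v_i^{-N}$. Lemma~\ref{lem:window} with $Y=\{b,x_i(N)\}$ gives $\kk(\Gamma,S_M)\le\sqrt{2/(M+1)}$, and letting $M\to\infty$ finishes the proof.

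The only steps that are not purely formal are the verifications that the convergence-case inputs match the hypotheses used before. The first is that $J_M$ is non-elementary and quasiconvex in $B$, which holds because it is finitely generated of rank at least $2$ in a free group. The second is the claimed equivariance of the sandwich limits. I expect the main obstacle to be checking that the verbatim transfer in Proposition~\ref{prop:convergence-relative-pp} needs only properties (a) and (c). Property (b) is not needed there, because the endpoint identities $p_{ij}=b_{j+r_i}^+$ are statements in $\partial B$ and do not involve fixed points in $\Lambda$.
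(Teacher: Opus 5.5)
Your proposal is correct and follows the paper's proof essentially step for step. The shared steps are: the reduction to the property~$(T)$ case; Lemma~\ref{lem:convergence-schottky} with $E=\{s_1,\ldots,s_d\}$ for the bridge conditions; the same choice of disjoint correction blocks; $J_M$-separation via Lemma~\ref{lem:boundary-separation}; sandwich dynamics transported by $t^j$-equivariance; Proposition~\ref{prop:convergence-relative-pp}; infinite index from property~$(T)$; and Lemma~\ref{lem:window}.
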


\begin{proof}
If $\Gamma$ does not have property~$(T)$, the conclusion is immediate.
Assume therefore that it does, and fix a minimal generating tuple
$s_1,\ldots,s_d$.  Apply Lemma~\ref{lem:convergence-schottky} with
$E=\{s_1,\ldots,s_d\}$.  Lemma~\ref{lem:convergence-schottky}\textup{(d)}
gives the bridge conditions
\[
   \zeta(t^+)\ne s_i\zeta(t^-),
   \qquad 1\le i\le d.
\]

Fix $M\ge1$, and write
\[
   b_j=t^jbt^{-j},\qquad
   J_M=\langle b_0,\ldots,b_M\rangle.
\]
As in the proof of Lemma~\ref{lem:correction-blocks}, choose positive
integers $r_i,q_i$ so that the $2d$ intervals
\[
   [r_i,r_i+M],\qquad[-q_i,M-q_i]
\]
are pairwise disjoint and avoid $[0,M]$, and so that
\begin{equation}\label{eq:convergence-correction}
   \zeta(b_{r_i}^-)\ne s_i\zeta(b_{-q_i}^+),
   \qquad 1\le i\le d.
\end{equation}
Such a choice is possible because
\[
   \zeta(b_r^-)\longrightarrow\zeta(t^+),
   \qquad
   \zeta(b_{-q}^+)\longrightarrow\zeta(t^-)
   \qquad(r,q\to\infty),
\]
and the bridge inequalities are open conditions.  Put
\[
   u_i=b_{r_i},\qquad v_i=b_{-q_i},
\]
and, for $0\le j\le M$, put
\[
   p_{ij}=t^ju_i^+=b_{j+r_i}^+,
   \qquad
   q_{ij}=t^jv_i^-=b_{j-q_i}^-.
\]
Let $I_M$ be the union of the above index intervals and put
$B_{I_M}=\langle b_k:k\in I_M\rangle$.  Distinct elements of the Schreier
basis $\{b_k:k\in\mathbb Z\}$ have disjoint fixed-point sets in
$\partial B$.  Hence the displayed endpoints are pairwise distinct and
belong to $\partial B_{I_M}$.  Lemma~\ref{lem:boundary-separation} now
shows that the family $((p_{ij},q_{ij}))_{i,j}$ is $J_M$-separated.

For $N\ge1$, define
\[
   x_i(N)=u_i^Ns_iv_i^N,
   \qquad
   g_{ij,N}=t^jx_i(N)t^{-j}.
\]
By \eqref{eq:convergence-correction},
Lemma~\ref{lem:convergence-sandwich}, and equivariance, these sequences
satisfy \eqref{eq:convergence-relative-dynamics}.  Proposition
\ref{prop:convergence-relative-pp} therefore shows that, for all
sufficiently large $N$,
\[
   H_{M,N}=\left\langle
      b_j,g_{ij,N}:0\le j\le M,\ 1\le i\le d
   \right\rangle
\]
is a nonabelian free group.  Choose one such $N=N(M)$.  The group
$H_{M,N}$ has infinite index in $\Gamma$: otherwise it would inherit
property~$(T)$
from $\Gamma$, whereas a nonabelian free group maps onto $\mathbb Z$ and
cannot have property~$(T)$; see \cite[Theorem~1.7.1]{BHV}.

The set
\[
   S_M=\{t,b,x_1(N),\ldots,x_d(N)\}
\]
generates $\Gamma$, since
\[
   s_i=u_i^{-N}x_i(N)v_i^{-N},
   \qquad 1\le i\le d.
\]
It has cardinality at most $d+2$, and
Lemma~\ref{lem:window}, with
$Y=\{b,x_1(N),\ldots,x_d(N)\}$, gives
\[
   \kk(\Gamma,S_M)\le\sqrt{\frac{2}{M+1}}.
\]
Letting $M$ tend to infinity proves the theorem.
\end{proof}

\begin{corollary}[Hyperbolic and relatively hyperbolic groups]
\label{cor:nonlinear-examples}
The conclusion of Theorem~\ref{thm:convergence-extension} holds for:
\begin{enumerate}[label=(\roman*)]
\item every finitely generated non-elementary word-hyperbolic group;
\item every finitely generated non-elementary subgroup of a
      word-hyperbolic group;
\item every finitely generated non-elementary relatively hyperbolic group.
\end{enumerate}
\end{corollary}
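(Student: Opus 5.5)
The plan is to reduce each of the three cases to Theorem~\ref{thm:convergence-extension}. In each case I would exhibit a non-elementary convergence action of the group on a compact metrizable space. No new Kazhdan-constant estimate is needed: once such an action is in hand, the theorem gives $\kk_{\le d+2}(\Gamma)=0$ directly.

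For (i), let $\Gamma$ be finitely generated and word-hyperbolic. It acts on its Gromov boundary $\partial\Gamma$, which is compact and metrizable. By the classical result (Gromov; Tukia; Bowditch), the action of a hyperbolic group on $\partial\Gamma$ is a convergence action, in fact a uniform one. Non-elementary means $\Gamma$ is not virtually cyclic and not finite. Then $|\partial\Gamma|>2$, and the limit set is all of $\partial\Gamma$, so the action is non-elementary in the sense used in Section~\ref{sec:convergence-extension}.

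For (ii), let $\Gamma\le G$ be finitely generated with $G$ hyperbolic, and restrict the convergence action of $G$ on $\partial G$ to $\Gamma$. The convergence property passes to subgroups, since any sequence of distinct elements of $\Gamma$ is a sequence of distinct elements of $G$. The limit set $\Lambda(\Gamma)\subset\partial G$ is closed, hence compact and metrizable. The only point to check is the meaning of non-elementary. I would take the standard definition: $\Gamma$ is not virtually cyclic (or finite), equivalently $\Gamma$ contains a nonabelian free subgroup. With that definition, $|\Lambda(\Gamma)|>2$, because a subgroup with at most two limit points is finite or virtually cyclic (it fixes a point or a pair of points). Quasiconvexity of $\Gamma$ is not needed, since Theorem~\ref{thm:convergence-extension} only uses the action.

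For (iii), let $\Gamma$ be hyperbolic relative to a family of subgroups. By Bowditch's definition, or its equivalence with the other definitions, $\Gamma$ acts on its Bowditch boundary $\partial(\Gamma,\mathcal P)$ as a geometrically finite convergence group. That boundary is compact and metrizable because $\Gamma$ is countable. Non-elementary means the boundary has more than two points, which is again the hypothesis of Theorem~\ref{thm:convergence-extension}.

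The only delicate point is matching the various definitions of ``non-elementary'' to the condition $|\Lambda|>2$. This is sharpest in (iii), where one must exclude the case of a proper peripheral subgroup equal to $\Gamma$, since that gives a one-point boundary. With the standard definitions, the matching is immediate. Everything else is a citation, after which Theorem~\ref{thm:convergence-extension} applies in each case.
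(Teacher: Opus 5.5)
Your proposal matches the paper's proof: both apply Theorem~\ref{thm:convergence-extension} using the convergence action on the Gromov boundary, the restriction of the ambient group's boundary action to the subgroup and its limit set, and the Bowditch boundary, and then identify non-elementarity with $|\Lambda|>2$. The paper avoids your definitional check in (ii) by \emph{defining} non-elementary there as ``limit set has more than two points.'' If you keep the ``not finite or virtually cyclic'' definition, your parenthetical justification is too thin. In a general convergence group a parabolic subgroup fixes a point without being virtually cyclic. The fact you need is the hyperbolic-group one: infinite subgroups contain infinite-order, hence loxodromic, elements, so there are no parabolics, and a subgroup preserving a loxodromic's fixed pair is virtually cyclic.
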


\begin{proof}
A word-hyperbolic group acts as a convergence group on its Gromov
boundary.  More generally, a subgroup of a convergence group acts as a
convergence group after restricting the action to its limit set; in
\textup{(ii)}, ``non-elementary'' means that this limit set has more than
two points.  A
relatively hyperbolic group acts as a convergence group on its Bowditch
boundary.  In each case the hypothesis of
Theorem~\ref{thm:convergence-extension} is precisely the stated
non-elementarity assumption; see \cite{Bowditch-hyp,Yaman}.
\end{proof}

\begin{remark}[Comparison with Osin]
Osin proved that the usual uniform Kazhdan constant of every infinite
word-hyperbolic group vanishes \cite{Osin}.  Theorem
\ref{thm:convergence-extension} strengthens this in the non-elementary
case by imposing the fixed bound $d(\Gamma)+2$ on the cardinality of the
generating sets.
\end{remark}

\begin{remark}[The Osin--Sonkin examples]
There is no conflict with the uniform Kazhdan groups constructed by Osin
and Sonkin \cite{OS}.  Their examples are infinite periodic quotients of
hyperbolic Kazhdan groups, rather than hyperbolic groups themselves.  In
particular, they contain no loxodromic elements.  Since every
non-elementary convergence group contains a loxodromic element, those
groups admit no non-elementary convergence action.
\end{remark}

\begin{remark}[Acylindrically hyperbolic groups]
The argument does not immediately extend to every acylindrically
hyperbolic group.  Such a group contains Schottky free subgroups, but the
boundary of the hyperbolic space on which it acts need not be compact.
Compactness is used in the finite-family relative ping-pong argument.  An
extension to that setting would require an additional uniform separation
statement, perhaps formulated using hyperbolically embedded subgroups.
\end{remark}
}

{
\section*{Acknowledgments}
The first author was partially supported by the National Science
Foundation under grant DMS-2505196.  The second author was partially
supported by the European Research Council (ERC) under the European
Union's Horizon 2020 research and innovation programme (grant agreement
No.~882751, TeStability).

The authors acknowledge the use of OpenAI's ChatGPT in developing the
arguments and preparing the manuscript.  The authors take full
responsibility for the mathematical content.
\par}

\end{document}